\newcommand{\Z}{{\mathbf Z}}
\newcommand{\F}{{\mathbf F}}
\renewcommand{\O}{{\mathcal O}}
\newcommand{\Q}{\mathbf{Q}}
\newcommand{\RR}{\mathbf{R}}
\newcommand{\C}{\mathbf{C}}
\newcommand{\Qbar}{{\overline{\Q}}}
\newcommand{\Fbar}{{\overline{\F}}}
\newcommand{\p}{{\mathfrak p}}
\newcommand{\q}{{\mathfrak q}}
\renewcommand{\r}{{\mathfrak r}}
\newcommand{\R}{{\mathfrak R}}
\newcommand{\pibar}{\overline{\pi}}
\newcommand{\abs}[1]{\lvert #1 \rvert}
\newcommand{\ghat}{{\widehat{g}}}
\DeclareMathOperator{\Jac}{Jac}
\DeclareMathOperator{\Char}{char}
\newcommand{\End}{\operatorname{End}}
\newcommand{\Gal}{\operatorname{Gal}}
\DeclareMathOperator{\N}{N}
\DeclareMathOperator{\Img}{Im}
\spnewtheorem{open}[theorem]{Open problem}{\bfseries}{\itshape}
\spnewtheorem{algorithm}[theorem]{Algorithm}{\bfseries}{\rmfamily}
\spnewtheorem{ex}[theorem]{Example}{\bfseries}{\rmfamily}
\spnewtheorem{rem}[theorem]{Remark}{\bfseries}{\rmfamily}
\begin{document}

\title{Abelian varieties with prescribed \\ embedding degree}
\author{David Freeman\inst{1}, Peter Stevenhagen\inst{2},
           and Marco Streng\inst{2}}

\institute{University of California, Berkeley\footnote{The
first author is supported by a National Defense Science and
Engineering Graduate Fellowship} \\
\email{dfreeman@math.berkeley.edu}
\and Mathematisch Instituut, Universiteit Leiden\\
\email{psh,streng@math.leidenuniv.nl}}

\maketitle

\begin{abstract}
We present an algorithm that, on input of a CM-field $K$,
an integer $k\ge1$, and a prime $r\equiv1\bmod k$,
constructs a $q$-Weil number $\pi\in\O_K$ corresponding to
an ordinary, simple abelian variety $A$ over the
field $\F$ of $q$ elements
that has an $\F$-rational point of order $r$
and embedding degree~$k$ with respect to $r$.
We then discuss how CM-methods over $K$ can be used to
explicitly construct $A$.
\end{abstract}



\section{Introduction}
\label{s:intro}

Let $A$ be an abelian variety defined over a finite field $\F$,
and $r\ne \Char(\F)$ a prime number dividing the order of the group $A(\F)$.
Then the \emph{embedding degree} of $A$ with respect to $r$ is the
degree of the field extension $\F\subset \F(\zeta_r)$ obtained by adjoining
a primitive $r$-th root of unity $\zeta_r$ to $\F$.

The embedding degree is a natural notion in pairing-based
cryptography, where $A$ is taken to be the Jacobian of a
curve defined over $\F$.
In this case, $A$ is principally polarized and we have
the non-degenerate \emph{Weil pairing}
\[
e_r: A[r]\times A[r] \longrightarrow \mu_r
\]
on the subgroup scheme $A[r]$ of $r$-torsion points of $A$
with values in the $r$-th roots of unity.
If $\F$ contains $\zeta_r$,
we also have the non-trivial {\it Tate pairing}
\[
t_r: A[r](\F) \times A(\F)/rA(\F) \to \mathbf{F}^*/(\F^*)^r.
\]
The Weil and Tate pairings can be used to `embed' $r$-torsion subgroups of
$A(\F)$ into the multiplicative group $\F(\zeta_r)^*$, and thus the discrete
logarithm problem in $A(\F)[r]$ can be `reduced' to the same problem in
$\F(\zeta_r)^*$ \cite{menezes-okamoto-vanstone,frey-ruck}.
In pairing-based cryptographic protocols \cite{paterson},
one chooses the prime $r$ and the embedding
degree $k$ such that the discrete logarithm problems in $A(\F)[r]$ and
$\F(\zeta_r)^*$ are computationally infeasible, and
of roughly equal difficulty.
This means that $r$ is typically large, whereas $k$ is small.
Jacobians of curves meeting such requirements are often said to be
{\it pairing-friendly}.

If $\F$ has order $q$, the embedding degree
$k=[\F(\zeta_r):\F]$ is simply the multiplicative order of $q$ in $(\Z/r\Z)^*$.
As `most' elements in $(\Z/r\Z)^*$ have large order,
the embedding degree of $A$ with respect to a large prime divisor $r$
of $\#A(\F)$ will usually be of the same size as $r$, and $A$ will not
be pairing-friendly.
One is therefore led to the question of how to efficiently
construct $A$ and $\F$ such that $A(\F)$ has a (large) prime factor $r$
and the embedding degree of $A$ with respect to $r$ has a
prescribed (small) value $k$.
The current paper addresses this question on two levels:
the \emph{existence} and the actual \emph{construction} of $A$ and $\F$.

Section \ref{s:weilnumbers} focuses on the question whether,
for given $r$ and $k$,
there exist abelian varieties $A$ that are defined over a finite field $\F$,
have an $\F$-rational point of order $r$, and have embedding degree $k$
with respect to $r$.
We consider only abelian varieties $A$ that are \emph{simple},
that is, not isogenous (over $\F$) to a product of lower-dimensional varieties,
as we can always reduce to this case.
By Honda-Tate theory \cite{honda-tate}, isogeny classes of simple
abelian varieties $A$ over
the field $\F$ of $q$ elements are in one-to-one correspondence
with $\Gal(\Qbar/\Q)$-conjugacy classes of \emph{$q$-Weil numbers},
which are algebraic integers $\pi$ with the property that all
embeddings of $\pi$ into $\C$ have absolute value $\sqrt{q}$.
This correspondence is given by the map sending $A$
to its $q$-th power Frobenius endomorphism $\pi$ inside the number
field $\Q(\pi) \subset \End(A)\otimes \Q$.
The existence of abelian varieties with the properties we want is
thus tantamount to the existence of suitable Weil numbers.

Our main result, Algorithm \ref{alg:construct-pi},
constructs suitable $q$-Weil numbers $\pi$ in a given \emph{CM-field} $K$.
It exhibits $\pi$ as a \emph{type norm} of an element in a
\emph{reflex field} of $K$ satisfying certain congruences modulo $r$.
The abelian varieties $A$ in the isogeny classes over $\F$
that correspond to these Weil numbers have
an $\F$-rational point of order $r$ and embedding degree $k$
with respect to $r$.
Moreover, they are \emph{ordinary}, i.e.,
$\# A(\Fbar)[p]=p^g$, where $p$ is the characteristic of $\F$.
Theorem \ref{thm:ispoltime} shows that for fixed $K$,
the expected run time of our algorithm is heuristically
polynomial in $\log r$.

For an abelian variety of dimension $g$ over the field
$\F$ of $q$ elements, the group $A(\F)$ has roughly $q^g$ elements,
and one compares this size to $r$ by setting
\addtocounter{theorem}{1}
\begin{equation}\label{rho}
\rho=\frac{g\log q}{\log r}.
\end{equation}
In cryptographic terms, $\rho$ measures the ratio of a pairing-based system's
required bandwidth to its security level,
so small $\rho$-values are desirable.
{\it Supersingular} abelian varieties can achieve $\rho$-values close
to $1$, but their embedding degrees are limited to a few values that
are too small to be practical \cite{galbraith,rubin-silverberg}.
Theorem \ref{thm:heur} discusses
the distribution of the (larger) $\rho$-values we obtain.

In Section \ref{s:construct}, we address the issue of the actual construction
of abelian varieties corresponding to the Weil numbers found by
our algorithm.
This is accomplished via the construction in characteristic zero of the
abelian varieties having CM by the ring of integers $\O_K$ of $K$,
a hard problem that is far from being algorithmically
solved.
We discuss the elliptic case $g=1$,
for which reasonable
algorithms exist, and the case $g=2$, for which such algorithms
are still in their infancy.
For genus $g\ge3$, we restrict attention to a few families of curves
that we can handle at this point.
Our final Section \ref{s:examples} provides numerical examples.


\section{Weil numbers yielding prescribed embedding degrees}
\label{s:weilnumbers}

Let $\F$ be a field of $q$ elements,
$A$ a $g$-dimensional simple abelian variety over $\F$,
and $K=\Q(\pi) \subset \End(A)\otimes\Q$ the number
field generated by the Frobenius endomorphism $\pi$.
Then $\pi$ is a \emph{$q$-Weil number} in $K$:
an algebraic integer with the property that
all of its embeddings in $\Qbar$ have complex absolute value $\sqrt{q}$.

The $q$-Weil number $\pi$ determines the group
order of $A(\F)$: the $\F$-rational points of $A$ form the kernel of
the endomorphism $\pi-1$, and in the case where $K=\Q(\pi)$ is the full
endomorphism algebra $\End(A)\otimes\Q$ we have
$$
\#A(\F) = \N_{K/\Q}(\pi - 1).
$$
In the case $K=\End(A)\otimes\Q$ we will focus on,
$K$ is a \emph{CM-field} of degree $2g$ as in
\cite[Section 1]{honda-tate}, i.e.,
a totally complex quadratic extension of a totally
real subfield $K_0\subset K$.

\begin{proposition}
\label{p:exist-embed} Let $A$, $\F$ and $\pi$ be as above, and
assume $K=\Q(\pi)$ equals $\End_{\F}(A)\otimes \Q$.
Let $k$ be a positive integer, $\Phi_k$ the $k$-th cyclotomic polynomial,
and $r \nmid qk$ a prime number.
If we have
\begin{eqnarray*}
\N_{K/\Q}(\pi - 1) & \equiv & 0 \pmod{r}, \\
\Phi_k(\pi \pibar) & \equiv & 0 \pmod{r},
\end{eqnarray*}
then $A$ has embedding degree $k$ with respect to $r$.
\end{proposition}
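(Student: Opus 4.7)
The plan is to reduce the two congruences to a single divisibility in $\Z$ that directly controls the multiplicative order of $q$ modulo $r$, which by definition is the embedding degree.

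First I would observe that the product $\pi\pibar \in K$ equals the integer $q$. Indeed, $K$ is a CM-field, so its intrinsic complex conjugation $\,\bar{}\,$ agrees with the complex conjugation induced by every embedding $\sigma:K\hookrightarrow\C$; combined with the Weil condition $|\sigma(\pi)|=\sqrt{q}$ this gives $\sigma(\pi\pibar)=q$ for every $\sigma$, hence $\pi\pibar=q$ in $K$. Consequently $\Phi_k(\pi\pibar)=\Phi_k(q)$ is an ordinary integer, and the second hypothesis of the proposition becomes the statement
\[
r\mid\Phi_k(q)\quad\text{in }\Z.
\]

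Next I would use the standard fact about cyclotomic polynomials: if $r$ is a prime with $r\nmid k$, then the roots of $\Phi_k$ in $\F_r$ are exactly the elements of multiplicative order $k$ in $\F_r^*$. Since $r\nmid qk$ by hypothesis, $q$ is invertible modulo $r$ and $r\nmid k$, so $r\mid\Phi_k(q)$ forces $q$ to have order exactly $k$ in $(\Z/r\Z)^*$. By the remark in the introduction, this multiplicative order is precisely the embedding degree $[\F(\zeta_r):\F]$.

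To finish I only need that the embedding degree is defined at $r$, i.e. that $A$ has an $\F$-rational point of order $r$. The first hypothesis gives $r\mid\N_{K/\Q}(\pi-1)=\#A(\F)$ (using the formula recalled just before the proposition, valid because $K=\End_\F(A)\otimes\Q$), and since $r$ is prime Cauchy's theorem produces the required point of order $r$. Combining the two steps shows that $A$ has embedding degree $k$ with respect to $r$, as claimed.

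No step is really a serious obstacle; the only subtlety to flag is the identification $\pi\pibar=q$, which relies on the compatibility of the abstract CM involution on $K$ with complex conjugation in every embedding — this is what makes the \emph{algebraic} condition $\Phi_k(\pi\pibar)\equiv0\pmod r$ equivalent to the \emph{arithmetic} condition on the order of $q$ modulo $r$.
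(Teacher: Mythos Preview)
Your argument is correct and follows the same line as the paper's own proof: the first congruence gives $r\mid\#A(\F)$, and since $\pi\pibar=q$ the second congruence says $q$ has exact order $k$ in $(\Z/r\Z)^*$, which is the embedding degree. You have simply expanded the two one-line justifications the paper leaves implicit (why $\pi\pibar=q$, and why $r\mid\Phi_k(q)$ with $r\nmid k$ forces the order to be exactly $k$).
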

\begin{proof}
The first condition tells us that $r$ divides $\#A(\F)$,
the second that the order of $\pi\pibar=q$ in $(\Z/r\Z)^*$,
which is the embedding degree of $A$ with respect to~$r$, equals $k$.
\qed
\end{proof}
By Honda-Tate theory \cite{honda-tate},
all $q$-Weil numbers arise as Frobenius elements of abelian varieties
over $\F$.
Thus, we can prove the \emph{existence}
of an abelian variety $A$ as in Proposition \ref{p:exist-embed}
by exhibiting a $q$-Weil number $\pi \in K$ as in that
proposition.
The following Lemma states what we need.
\begin{lemma}\label{l:honda-tate}
Let $\pi$ be a $q$-Weil number and $\F$ be the field of $q$ elements. 
Then there exists
a unique isogeny class of simple abelian varieties $A/\F$
with Frobenius $\pi$.
If $K=\Q(\pi)$ is totally imaginary of degree $2g$
and $q$ is prime, then such $A$ have dimension $g$, and $K$ is the full
endomorphism algebra $\End_{\F}(A)\otimes\Q$.
If furthermore $q$ is unramified in~$K$, then $A$ is ordinary.
\end{lemma}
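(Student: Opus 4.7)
The plan is to assemble the claims from the classical results of Honda--Tate and Tate on endomorphism algebras of abelian varieties over finite fields.

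\textbf{Existence and uniqueness.} This is exactly the statement of the Honda--Tate correspondence cited at \cite{honda-tate}: up to $\F$-isogeny, simple abelian varieties over $\F$ are classified by the Galois conjugacy classes of $q$-Weil numbers, and the class of $\pi$ determines a unique such isogeny class.

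\textbf{Dimension and endomorphism algebra.} Let $D=\End_{\F}(A)\otimes\Q$. Tate showed that $D$ is a central simple $K$-algebra, and that
$$2\dim A = [D:K]^{1/2}\cdot [K:\Q],$$
with $[D:K]^{1/2}$ equal to the lcm of the denominators of the local invariants of $D$. I would compute these invariants place by place. Since $K$ is totally imaginary, every archimedean invariant is trivial. At finite places $v\nmid p$, the invariant is trivial as well. At a place $v\mid p$, Tate's formula gives
$$\mathrm{inv}_v(D) \;=\; \frac{v(\pi)}{v(q)}\,[K_v:\Q_p]\pmod{\Z}.$$
The key step, which uses the hypothesis that $q=p$ is prime, is to rewrite this as $v(\pi)f(v/p)\pmod\Z$, using $v(q)=e(v/p)$ and $[K_v:\Q_p]=e(v/p)f(v/p)$. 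Both factors are integers, so this invariant vanishes. Hence $D=K$, and the displayed formula yields $\dim A = [K:\Q]/2 = g$.

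\textbf{Ordinariness.} For an abelian variety with $D=K$, the characteristic polynomial of Frobenius on the Tate module coincides with the minimal polynomial of $\pi$, and $A$ is ordinary exactly when $g$ of its roots are $p$-adic units (Newton slope $0$) and $g$ have slope $1$. Equivalently, for every place $v$ of $K$ above $p$ one of $v(\pi), v(\pibar)$ must vanish and the other must equal $v(q)$. Using the Weil-number relation $\pi\pibar = q = p$, we have $v(\pi)+v(\pibar) = v(p) = e(v/p)$, which equals $1$ precisely because $q$ is assumed unramified in $K$. It follows that each $v\mid p$ satisfies $\{v(\pi),v(\pibar)\}=\{0,1\}$, and a short count using $\sum_{v\mid p} f(v/p)\,v(\pi) = v_p(\N_{K/\Q}(\pi)) = g$ confirms that exactly $g$ slopes are $0$ and $g$ are $1$. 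Hence $A$ is ordinary.

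The one mildly delicate point is the bookkeeping with normalizations in Tate's invariant formula; once those are pinned down, every step reduces to elementary valuation arithmetic using $\pi\pibar=q$.
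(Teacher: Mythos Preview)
Your argument is correct, but it follows a different route from the paper's. For the statement $\End_{\F}(A)\otimes\Q=K$ the paper simply invokes Waterhouse \cite[Theorem~6.1]{waterhouse}, whereas you unpack the mechanism: with $q=p$ prime one has $v(q)=e(v/p)$, so Tate's invariant $\tfrac{v(\pi)}{v(q)}[K_v:\Q_p]$ collapses to the integer $v(\pi)f(v/p)$ and all local invariants vanish. Your approach makes transparent exactly where the primality of $q$ is used, at the cost of a longer argument. For ordinariness the paper takes a shorter path: it cites the criterion that $A$ is ordinary iff $\pi+\pibar$ is coprime to $q=\pi\pibar$ in $\O_K$ \cite[Prop.~7.1]{waterhouse}, and then observes that any common prime divisor $\p$ of $(\pi)$ and $(\pibar)$ would satisfy $\p^2\mid q$, forcing $q$ to ramify. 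This avoids Newton polygons and the slope count entirely. Your Newton-slope argument is fine and yields the same conclusion, but the paper's ideal-theoretic version is more elementary and worth knowing.
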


\begin{proof}
The main theorem of \cite{honda-tate} yields
existence and uniqueness, and shows that
$E=\End_{\F}(A)\otimes \Q$ is a central simple algebra
over $K=\Q(\pi)$ satisfying
$$
2\cdot \mathrm{dim}(A)=[E:K]^{\frac{1}{2}}[K:\Q].
$$
For $K$ totally imaginary of degree $2g$ and $q$ prime,
Waterhouse \cite[Theorem 6.1]{waterhouse} shows that we have
$E=K$ and $\mathrm{dim}(A)=g$.
By \cite[Prop. 7.1]{waterhouse},
$A$ is ordinary if and only if $\pi + \pibar$ is prime to
$q=\pi\overline{\pi}$ in $\O_K$.
Thus if $A$ is not ordinary, the ideals
$(\pi)$ and $(\overline{\pi})$ have a common divisor $\p\subset \O_K$
with $\p^2 \mid q$, so $q$ ramifies in $K$.
\qed
\end{proof}

\begin{ex}\label{cycliccase}
Our general construction is motivated by the case where
$K$ is a Galois CM-field of degree $2g$, with cyclic Galois group
generated by~$\sigma$.
Here $\sigma^g$ is complex conjugation, so we
can construct an element $\pi\in\O_K$ satisfying
$\pi\sigma^g(\pi)=\pi\overline{\pi}\in\Z $
by choosing any $\xi\in \O_K$ and letting
$\pi=\prod_{i=1}^{g} \sigma^i(\xi)$.
For such $\pi$,
we have $\pi\overline{\pi}=N_{K/\Q }(\xi)\in\Z$.
If $N_{K/\Q }(\xi)$ is a prime $q$, then $\pi$ is a
$q$-Weil number in $K$.

Now we wish to impose the conditions of Proposition
\ref{p:exist-embed} on $\pi$. Let $r$ be a rational prime that
splits completely in $K$, and $\r$ a prime of
$\O_K$ over $r$.
For $i=1,\ldots,2g$, put $\r_i=\sigma^{-i}(\r )$;
then the factorization of $r$ in $\O_K$ is $r\O_K = \prod_{i=1}^{2g} \r_i$.
If $\alpha_i\in \F_r=\O_K/\r_i$ is the residue class of
$\xi$ modulo~$\r_i$, then $\sigma^i(\xi)$ modulo $\r$ is also $\alpha_i$,
so the residue class of $\pi$
modulo $\r$ is $\prod_{i=1}^{g}\alpha_i$.
Furthermore, the
residue class of $\pi\overline{\pi}$ modulo $\r$ is
$\prod_{i=1}^{2g}\alpha_i$.
If we choose $\xi$ to satisfy
\addtocounter{theorem}{1}
\begin{equation}\label{cond1}
\textstyle
\prod_{i=1}^g\alpha_i=1\in \F_r,
\end{equation}
we find $\pi\equiv 1 \pmod{\r}$ and thus $N_{K/\Q }(\pi-1)\equiv 0 \pmod{r}$.
By choosing $\xi$ such that in addition
\addtocounter{theorem}{1}
\begin{equation}\label{cond2}
\textstyle
\zeta=\prod_{i=1}^{2g}\alpha_i=\prod_{i=g+1}^{2g} \alpha_i
\end{equation}
is a primitive $k$-th root of unity in $\F_r^*$, we guarantee that
$\pi\overline{\pi}=q$ is a primitive $k$-th root of unity modulo
$r$. Thus we can try to find a Weil number as in Proposition
\ref{p:exist-embed} by picking residue classes $\alpha_i\in \F_r^*$
for $i=1,\ldots,2g$ meeting the two conditions above, computing some
`small' lift $\xi\in\O_K$ with $(\xi \bmod \r_i)=\alpha_i$, and
testing whether $\pi=\prod_{i=1}^{g} \sigma^i(\xi)$ has prime
norm. As numbers of moderate size have a high probability of being
prime by the prime number theorem, a small number of choices
$(\alpha_i)_i$ should suffice. There are $(r-1)^{2g-2}\varphi(k)$
possible choices for $(\alpha_i)_{i=1}^{2g}$,
where $\varphi$ is the Euler totient function,
so for $g>1$ and large $r$ we are very likely to succeed.
For $g=1$, there are only a few choices $(\alpha_1,\alpha_2)=(1,\zeta)$,
but one can try various lifts and thus recover what is known as the
Cocks-Pinch algorithm \cite[Theorem 4.1]{freeman-scott-teske} for
finding pairing-friendly elliptic curves.
\qed
\end{ex}

For arbitrary CM-fields $K$, the appropriate generalization of
the map
$$
\textstyle
\xi\mapsto \prod_{i=1}^{g}\sigma^i(\xi)
$$
in Example \ref{cycliccase}
is provided by the \emph{type norm}.
A \emph{CM-type} of a CM-field $K$ of degree $2g$
is a set $\Phi=\{\phi_1,\ldots,\phi_g\}$ of
embeddings of $K$ into its normal closure~$L$ such that
$\Phi\cup\overline\Phi=
\{\phi_1,\ldots,\phi_g,\overline{\phi_1},\ldots,\overline{\phi_g}\}$
is the complete set of embeddings of $K$ into~$L$.
The \emph{type norm} $N_{\Phi}: K\to L$ with respect to $\Phi$ is the map
$$
N_{\Phi}: x\longmapsto \textstyle{\prod_{i=1}^g\phi_i(x)},
$$
which clearly satisfies
\addtocounter{theorem}{1}
\begin{equation}\label{eq:typenorm}
N_{\Phi}(x)\overline{N_{\Phi}(x)}=N_{K/\Q }(x)\in\Q .
\end{equation}
If $K$ is not Galois, the type norm $N_\Phi$ does not map $K$ to
itself, but to its \emph{reflex field} $\widehat K$ with respect to
$\Phi$. To end up in $K$, we can however take the type norm with
respect to the \emph{reflex type} $\Psi$, which we will define now
(cf. \cite[Section 8]{shimura}).

Let $G$ be the Galois group of $L/\Q$, and $H$ the subgroup fixing $K$.
Then the $2g$ left cosets of $H$ in $G$ can be viewed as the embeddings
of $K$ in $L$, and this makes the CM-type $\Phi$ into a set of
$g$ left cosets of $H$ for which we have $G/H=\Phi\cup\overline\Phi$.
Let $S$ be the union of the left cosets in $\Phi$, and put
$\widehat S=\{\sigma^{-1}:\sigma\in S\}$.
Let $\widehat H = \{\gamma \in G : \gamma S = S\}$ be the stabilizer
of $S$ in $G$.
Then $\widehat H$ defines a subfield $\widehat K$ of $L$, and as we have
$\widehat H = \{\gamma \in G : \widehat S\gamma = \widehat S\}$
we can interpret $\widehat S$ as a
union of left cosets of $\widehat H$ inside $G$.
These cosets define a set of embeddings $\Psi$ of $\widehat K$ into~$L$.
We call $\widehat K$ the \emph{reflex field} of $(K,\Phi)$ and we
call $\Psi$ the {\it reflex type}.

\begin{lemma}
\label{l:reflex} The field $\widehat K$ is a CM-field. It is
generated over $\Q$ by the sums $\sum_{\phi\in \Phi}\phi(x)$ for
$x\in K$, and $\Psi$ is a CM-type of $\widehat K$. The type norm
$N_\Phi$ maps $K$ to~$\widehat K$.
\end{lemma}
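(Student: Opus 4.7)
The plan is to identify the embeddings $K\hookrightarrow L$ with the left cosets of $H$ in $G$, view $\Phi$ as a set of such cosets and $\widehat S=S^{-1}$ as a union of such cosets, and then reduce every assertion to a combinatorial statement in $G$. The global input I will rely on is that $L$, being the normal closure of a CM-field, is itself CM, so its complex conjugation $\iota$ is a central involution of $G$; the CM-type condition $\Phi\cup\overline\Phi=G/H$ (disjoint union) then reads $G=S\sqcup\iota S$.

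For the CM-field claim, $\iota S\ne S$ gives $\iota\notin\widehat H$, and since $\iota$ is central it normalizes $\widehat H$ and hence restricts to a nontrivial involution on $\widehat K$ whose fixed subfield is contained in the totally real field $L^{\langle\iota\rangle}$; so $\widehat K$ is a totally imaginary quadratic extension of a totally real field. The type-norm claim, and one inclusion in the generator claim, follow at once from a single observation: any $\gamma\in\widehat H$ satisfies $\gamma S=S$, hence permutes $\Phi$ as a set of cosets, and so fixes both $N_\Phi(x)=\prod_{\phi\in\Phi}\phi(x)$ and the sum $f(x)=\sum_{\phi\in\Phi}\phi(x)$ for every $x\in K$. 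This places $N_\Phi(K)$ and $\Q\bigl(\{f(x):x\in K\}\bigr)$ inside $\widehat K=L^{\widehat H}$.

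For the reverse inclusion in the generator claim I will invoke Artin--Dedekind linear independence of the distinct embeddings $\sigma|_K\colon K\to L$: if $\gamma\in G$ fixes every $f(x)$, then the identity $\sum_{\phi\in\Phi\setminus\gamma\Phi}\phi=\sum_{\phi\in\gamma\Phi\setminus\Phi}\phi$ holds on all of $K$, which forces $\gamma\Phi=\Phi$ as a set of cosets and hence $\gamma\in\widehat H$; so the subfield of $L$ generated by the $f(x)$ equals $L^{\widehat H}=\widehat K$. Finally, for the reflex CM-type claim, taking inverses in $G=S\sqcup\iota S$ and using the centrality of $\iota$ yields $G=\widehat S\sqcup\iota\widehat S$, which by construction of $\Psi$ is exactly the statement that $\Psi$ together with its complex conjugate exhausts the embeddings of $\widehat K$ into $L$ disjointly. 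The only step that requires more than formal coset manipulation is the linear independence argument above, and this is the part I expect to be the main obstacle.
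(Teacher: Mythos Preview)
Your argument is correct. The paper, however, takes a much shorter route: for the first three assertions (that $\widehat K$ is a CM-field, that it is generated by the sums $\sum_{\phi\in\Phi}\phi(x)$, and that $\Psi$ is a CM-type of $\widehat K$) it simply cites Shimura \cite[Chapter~II, Proposition~28]{shimura}, and only the type-norm statement is proved directly---by exactly the one-line observation you give, namely that any $\gamma\in\widehat H$ satisfies $\gamma S=S$, hence permutes the factors in $\prod_{\phi\in\Phi}\phi(x)$ and so fixes $N_\Phi(x)$.

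Your approach, by contrast, unpacks the reference into explicit coset combinatorics in $G$, taking as the only outside input that the normal closure $L$ of a CM-field is itself CM with central complex conjugation $\iota$. The step you flag as the main obstacle is in fact routine: the distinct embeddings $K\to L$ (indexed by $G/H$) are $L$-linearly independent by Dedekind's lemma, so $\sum_{\phi\in\Phi}\phi=\sum_{\phi\in\gamma\Phi}\phi$ on $K$ forces the two sets of cosets to coincide, hence $\gamma\in\widehat H$. What your approach buys is a self-contained proof that could replace the citation; what the paper's approach buys is brevity. One small point worth making explicit in your CM-field paragraph: the reason $\widehat K$ is totally \emph{imaginary} (and not just a quadratic extension of a totally real field) is that every embedding $\widehat K\hookrightarrow\C$ extends to an embedding of $L$, under which complex conjugation pulls back to the central element $\iota$, and $\iota$ acts nontrivially on $\widehat K$ by your argument.
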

\begin{proof}
The first two statements are proved in \cite[Chapter II, Proposition
28]{shimura} (though the definition of $\widehat H$ differs from
ours, because Shimura lets $G$ act from the right).
For the last statement, notice that for $\gamma\in \widehat H$, we
have $\gamma S=S$, so $\gamma\prod_{\phi\in
\Phi}\phi(x)=\prod_{\phi\in \Phi}\phi(x)$. \qed
\end{proof}

A CM-type $\Phi$ of $K$ is \emph{induced} from a CM-subfield
$K'\subset K$ if it is of the form
$\Phi=\{\phi : \phi|_{K'}\in \Phi'\}$ for some CM-type $\Phi'$ of $K'$.
In other words, $\Phi$ is induced from $K'$ if and only if $S$ as
above is a union of left cosets of $\mathrm{Gal}(L/K')$. We call
$\Phi$ {\it primitive} if it is not induced from a strict subfield of $K$;
primitive CM-types correspond to simple abelian varieties \cite{shimura}.
Notice that the reflex type $\Psi$ is primitive by
definition of $\widehat K$, and that $(K,\Phi)$ is induced from the
reflex of its reflex. In particular, if $\Phi$ is primitive, then
the reflex of its reflex is $(K,\Phi)$ itself.
For $K$ Galois and $\Phi$ primitive we have $\widehat K = K$, and the
reflex type of $\Phi$ is $\Psi = \{\phi^{-1} : \phi \in \Phi\}$.

For CM-fields $K$ of degree $2$ or $4$ with primitive CM-types,
the reflex field
$\widehat K$ has the same degree as $K$. This fails to be so for
$g\geq 3$.
\begin{lemma}
\label{l:reflexdeg}
If $K$ has degree $2g$, then the degree of $\widehat K$ divides $2^g g!$.
\end{lemma}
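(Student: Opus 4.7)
The plan is to reduce the bound on $[\widehat K:\Q]$ to a bound on $|G|=[L:\Q]$, and then to embed $G$ into the wreath product $\Z/2\wr S_g$, which has order $2^g g!$.

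First, since $\widehat K$ is by definition the fixed field of $\widehat H\subset G$ inside the Galois extension $L/\Q$, we have $[\widehat K:\Q]=[G:\widehat H]$, and this divides $|G|$ by Lagrange. It therefore suffices to show $|G|\mid 2^g g!$.

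Second, I would study the left action of $G$ on the $2g$-element coset space $G/H$. This action is faithful because $L$ is the Galois closure of $K$, so the normal core of $H$ in $G$ is trivial. The cosets of $H$ correspond to the $2g$ embeddings $K\hookrightarrow L$, and these come in $g$ complex-conjugate pairs; the pairing is induced by left multiplication by the complex-conjugation element $c\in G$. The crucial point is that $c$ lies in the center of $G$: any $\gamma\in G$ stabilizes the maximal totally real subfield $L_0\subset L$, so $\gamma c\gamma^{-1}$ restricts to the identity on $L_0$ and therefore lies in $\Gal(L/L_0)=\{1,c\}$; as $\gamma c\gamma^{-1}\ne 1$, we must have $\gamma c\gamma^{-1}=c$. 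Centrality of $c$ means that every element of $G$ preserves the partition of $G/H$ into the $g$ conjugate pairs, so the faithful action realizes $G$ as a subgroup of the group of permutations of a $2g$-element set that respects a pairing into $g$ pairs. That group is exactly $(\Z/2)^g\rtimes S_g=\Z/2\wr S_g$, of order $2^g g!$.

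The only genuine subtlety is the centrality of complex conjugation in the Galois group of the Galois closure of a CM-field — a standard but essential ingredient, since without it the $G$-action would not factor through the wreath product. Once that is in hand, the chain of divisibilities $[\widehat K:\Q]\mid|G|\mid 2^g g!$ gives the claim at once.
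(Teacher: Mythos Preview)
Your argument is correct and reaches the same intermediate conclusion as the paper---namely that $[L:\Q]$ divides $2^g g!$---but by a genuinely different route. The paper proceeds field-theoretically: it writes $K=K_0(\sqrt{\eta})$ with $K_0$ totally real of degree $g$ and $\eta$ totally negative, observes that the normal closure $\widetilde{K_0}$ of $K_0$ has degree dividing $g!$, and then obtains $L$ by adjoining the square roots of the $g$ conjugates of $\eta$, picking up a further factor dividing $2^g$. Your approach is group-theoretic: you use the centrality of complex conjugation in $G=\Gal(L/\Q)$ to show that the faithful action of $G$ on the $2g$ embeddings of $K$ respects the partition into $g$ conjugate pairs, and hence embeds $G$ into $\Z/2\wr S_g$.

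The paper's argument is shorter and more concrete, and it immediately yields the explicit description of $L$ and of the generic $\widehat K$ used in the remark following the lemma. Your argument, on the other hand, isolates the structural reason for the bound (centrality of $c$) and identifies the ambient group $\Z/2\wr S_g$ as the natural ``generic'' Galois group, which is conceptually illuminating even if not needed here. Both are perfectly valid; the one ingredient worth flagging as nontrivial in yours is that $L$ is itself a CM-field (so that $\Gal(L/L_0)=\{1,c\}$ makes sense), which follows from the fact that a compositum of CM-fields is CM.
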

\begin{proof}
We have $K=K_0(\sqrt \eta)$, with $K_0$ totally real and $\eta\in K$
totally negative. The normal closure $L$ of $K$ is obtained by
adjoining to the normal closure $\widetilde {K_0}$ of $K_0$, which
has degree dividing $g!$, the square roots of the $g$ conjugates of
$\eta$. Thus $L$ is of degree dividing $2^g g!$, and $\widehat K$ is
a subfield of $L$.\qed
\end{proof}
For a `generic' CM field $K$ the degree of $L$ is exactly $2^gg!$,
and $\widehat{K}$ is a field of degree $2^g$ generated by
$\sum_\sigma \sqrt{\sigma(\eta)}$,
with $\sigma$ ranging over $\Gal(K_0/\Q)$.

{}From \eqref{eq:typenorm} and Lemma \ref{l:reflex}, we find that for
every $\xi\in \mathcal{O}_{\widehat K}$, the element
$\pi=N_{\Psi}(\xi)$ is an element of $\mathcal{O}_K$ that satisfies
$\pi\overline{\pi}\in\Z $. To make $\pi$ satisfy the conditions of
Proposition \ref{p:exist-embed}, we need to impose conditions modulo
$r$ on $\xi$ in~$\widehat K$. Suppose $r$ splits completely in $K$,
and therefore in its normal closure $L$ and in the reflex field
$\widehat K$ with respect to $\Phi$. Pick a prime $\R$ over $r$ in
$L$, and write $\r_\psi=\psi^{-1}(\R)\cap \O_{\widehat K}$ for
$\psi\in \Psi$.
Then the factorization of $r$ in $\O_{\widehat K}$ is
\addtocounter{theorem}{1}
\begin{equation}\label{rfac}
\textstyle
r\O_{\widehat{K}}=\prod_{\psi\in \Psi}
\r_\psi\overline{\r_\psi}.
\end{equation}

\begin{theorem}
\label{t:construct-pi}
Let $(K,\Phi)$ be a CM-type and $(\widehat K,\Psi)$ its reflex.
Let $r \equiv 1 \pmod{k}$ be a prime that splits completely in $K$,
and write its factorization in $\O_{\widehat K}$ as in \eqref{rfac}.
Given $\xi\in\mathcal{O}_{\widehat K}$,
write $(\xi\bmod \r_\psi)=\alpha_\psi\in \F_r$ and
$(\xi\bmod \overline{\r_\psi})=\beta_\psi\in \F_r$ for $\psi\in\Psi$.
If we have
\addtocounter{theorem}{1}
\begin{equation}
\label{eq:conditions}
\textstyle
\prod_{\psi\in\Psi} \alpha_\psi=1
\qquad\mathrm{and}\qquad
\prod_{\psi\in\Psi} \beta_\psi=\zeta
\end{equation}
for some primitive $k$-th root of unity $\zeta\in \F_r^*$,
then $\pi=N_{\Psi}(\xi)\in\O_K$ satisfies $\pi\overline{\pi}\in\Z $ and
\begin{eqnarray*}
\N_{K/\Q}(\pi - 1) & \equiv & 0 \pmod{r}, \\
\Phi_k(\pi \pibar) & \equiv & 0 \pmod{r}.
\end{eqnarray*}
\end{theorem}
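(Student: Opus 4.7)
The plan is to fix the prime $\R$ of $L$ chosen in \eqref{rfac} and reduce $\pi=N_\Psi(\xi)$ and $\pibar$ modulo $\R$; the two hypotheses in \eqref{eq:conditions} then translate directly into the two congruences to be proved. The preliminary assertions that $\pi\in\O_K$ and $\pi\pibar\in\Z$ are already recorded just above the theorem: applying Lemma~\ref{l:reflex} to the reflex type $(\widehat K,\Psi)$ places $N_\Psi(\widehat K)$ inside a CM-subfield of $K$ (inside $K$ itself when $\Phi$ is primitive, and otherwise the subfield from which $\Phi$ is induced), and \eqref{eq:typenorm} applied to $(\widehat K,\Psi)$ gives $\pi\pibar=N_{\widehat K/\Q}(\xi)\in\Z$. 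This covers the first two claims.

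For the main step, fix $\psi\in\Psi$. Since $r$ splits completely in $L$, the residue field $\O_L/\R$ is $\F_r$. The ring homomorphism $\O_{\widehat K}\to\O_L/\R$ given by $x\mapsto\psi(x)\bmod\R$ has kernel $\psi^{-1}(\R)\cap\O_{\widehat K}=\r_\psi$ and factors as the identity on $\F_r\cong\O_{\widehat K}/\r_\psi$, so $\psi(\xi)\equiv\alpha_\psi\pmod{\R}$. For the conjugate contribution, let $c_L$ denote complex conjugation on $L$; since $c_L$ commutes with each embedding of the CM-field $\widehat K$ and restricts to complex conjugation on $\widehat K$, a short check gives $(c_L\circ\psi)^{-1}(\R)\cap\O_{\widehat K}=c_{\widehat K}(\r_\psi)=\overline{\r_\psi}$, and the same argument then yields $c_L(\psi(\xi))\equiv\beta_\psi\pmod{\R}$. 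Taking the product over $\psi\in\Psi$ and using $\pibar=c_L(\pi)$, this gives
\[
\pi\equiv\prod_{\psi\in\Psi}\alpha_\psi=1\pmod{\R},\qquad
\pibar\equiv\prod_{\psi\in\Psi}\beta_\psi=\zeta\pmod{\R}.
\]

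To finish, the first congruence places $\pi-1$ in the prime $\R\cap\O_K$ of $K$ above $r$, so the rational integer $\N_{K/\Q}(\pi-1)$ lies in $\R\cap\Z=r\Z$, which gives the first divisibility. Multiplying the two congruences yields $\pi\pibar\equiv\zeta\pmod{\R}$; since $\pi\pibar\in\Z$, this is a congruence in $\F_r$, and applying $\Phi_k$ gives $\Phi_k(\pi\pibar)\equiv\Phi_k(\zeta)=0\pmod{r}$ because $\zeta$ is a primitive $k$-th root of unity, yielding the second divisibility. The one point requiring care is the compatibility $(c_L\circ\psi)^{-1}(\R)\cap\O_{\widehat K}=\overline{\r_\psi}$ that identifies $c_L(\psi(\xi))\bmod\R$ with $\beta_\psi$; once this bookkeeping with complex conjugation on $\widehat K$ and on $L$ is settled, the remainder of the argument is a direct computation.
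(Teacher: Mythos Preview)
Your proof is correct and follows the same approach as the paper: reduce $\pi$ and $\pibar$ modulo the fixed prime $\R$ of $L$, show that the hypotheses \eqref{eq:conditions} give $\pi\equiv 1$ and $\pibar\equiv\zeta$ modulo $\R$, and conclude that $\pi-1\in\O_K$ and $\Phi_k(\pi\pibar)\in\Z$ lie in $\R$. The paper's own proof is a one-line pointer to Example~\ref{cycliccase}, so you have simply filled in the details of that generalization, including the bookkeeping with complex conjugation identifying $(c_L\circ\psi)^{-1}(\R)\cap\O_{\widehat K}$ with $\overline{\r_\psi}$.
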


\begin{proof}
This is a straightforward generalization of the argument in
Example \ref{cycliccase}.
The conditions \eqref{eq:conditions} generalize \eqref{cond1}
and \eqref{cond2}, and imply in the present context
that $\pi-1\in\O_K$ and $\Phi_k(\pi\pibar)\in\Z$ are in the prime
$\R\subset\O_L$ over $r$ that underlies the factorization \eqref{rfac}.
\qed
\end{proof}
If the element $\pi$ in Theorem \ref{t:construct-pi} generates $K$
and $\N_{K/\Q}(\pi)$ is a prime $q$ that is unramified in $K$,
then by Lemma \ref{l:honda-tate} $\pi$ is a
$q$-Weil number corresponding to
an ordinary abelian variety $A$ over $\F=\F_q$ with endomorphism algebra
$K$ and Frobenius element $\pi$.
By Proposition \ref{p:exist-embed}, $A$ has embedding degree $k$
with respect to $r$.
This leads to the following algorithm.
\begin{algorithm}
\label{alg:construct-pi}
\hfil\break\indent
Input: a CM-field $K$ of degree $2g \ge 4$,
a primitive CM-type $\Phi$ of $K$,
a positive integer $k$,
and a prime $r \equiv 1 \pmod{k}$ that splits completely in $K$.

Output: a prime $q$ and a $q$-Weil number $\pi\in K$
corresponding to an ordinary, simple abelian variety $A/\F_q$ with embedding
degree $k$ with respect to $r$.

\begin{enumerate}
\item Compute a Galois closure $L$ of $K$ and the reflex
$(\widehat K,\Psi)$ of $(K,\Phi)$.
Set $\widehat g \leftarrow \frac{1}{2} \deg \widehat K$
and write $\Psi=\{\psi_1, \psi_2,\ldots, \psi_\ghat\}$.
\item \label{st:factor}
Fix a prime $\mathfrak{R}\mid r$ of $\O_L$, and compute
the factorization of $r$ in $\O_{\widehat K}$
as in \eqref{rfac}.
\item
Compute a primitive $k$-th root of unity $\zeta\in \F_r^*$.
\item \label{st:random}
Choose random
$\alpha_1,\ldots,\alpha_{\widehat{g}-1},\beta_1,\ldots,\beta_{\widehat{g}-1}
\in \F_r^*$.
\item
Set $\alpha_{\widehat{g}} \leftarrow \prod_{i=1}^{\widehat{g}-1}
\alpha_i^{-1} \in \F_r^*$ and $\beta_{\widehat{g}} \leftarrow \zeta
\prod_{i=1}^{\widehat{g}-1} \beta_i^{-1} \in \F_r^*$.
\item \label{st:crt}
Compute $\xi \in \O_{\widehat K}$ such that
$(\xi\mod \r_{\psi_i})=\alpha_i$ and
$(\xi\mod\overline{\r_{\psi_i}})=\beta_i$ for $i=1,2,\ldots , \widehat g$.
\item \label{st:q}
Set $q \leftarrow \N_{\widehat K/\Q}(\xi)$.
If $q$ is not prime, go to Step \eqref{st:random}.
\item \label{st:pi}
Set $\pi \leftarrow N_{\Psi}(\xi)$.
If $q$ is not unramified in $K$, or $\pi$ does not generate $K$,
go to Step \eqref{st:random}.
\item \label{st:output}
Return $q$ and $\pi$.
\end{enumerate}
\end{algorithm}

\begin{rem}
We require $g\ge 2$ in Algorithm \ref{alg:construct-pi}, as the case
$g=1$ is already covered by Example \ref{cycliccase}, and requires
a slight adaptation.

The condition that $r$ be prime is for simplicity of presentation
only;
the algorithm easily extends to square-free values of $r$ that are
given as products of splitting primes.
Such $r$ are required, for example, by the cryptosystem of
\cite{boneh-goh-nissim}.
\end{rem}

\section{Performance of the algorithm}

\begin{theorem}
\label{thm:ispoltime}
If the field $K$ is fixed, then
the heuristic expected run time of Algorithm \ref{alg:construct-pi}
is polynomial
in~$\log r$.
\end{theorem}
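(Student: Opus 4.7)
The plan is to show that a single iteration of the main loop of Algorithm \ref{alg:construct-pi} runs in time polynomial in $\log r$, and that, heuristically, only polynomially many iterations are needed in expectation.

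For the per-iteration cost, I would observe that fixing $K$ also fixes, once and for all, the Galois closure $L$, the reflex $(\widehat K,\Psi)$, an integral basis of $\O_{\widehat K}$, and the discriminant of $K$. The factorization of $r$ in $\O_{\widehat K}$ (Step \eqref{st:factor}) reduces to factoring the fixed defining polynomial of $\widehat K$ modulo $r$, which is polynomial in $\log r$. In the CRT step \eqref{st:crt}, I would take the canonical small lift of the residue data $(\alpha_i,\beta_i)$, namely the unique representative $\xi\in\O_{\widehat K}$ whose coordinates in the fixed integral basis lie in $\{0,1,\dots,r-1\}$; the $2\widehat g$ complex conjugates of this $\xi$ then have absolute value at most $c(K)\,r$, so $q = \N_{\widehat K/\Q}(\xi)$ and $\pi = N_\Psi(\xi)$ satisfy $\log q = O(\log r)$ and $\log|\pi| = O(\log r)$. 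Every other operation in the loop — residue arithmetic modulo $r$, the norms computed in the fixed fields $\widehat K$ and $K$, a primality test on $q$, the unramifiedness check, and the test that $\pi$ generates $K$ — is a standard routine polynomial in $\log r$.

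For the expected number of iterations, I would invoke the heuristic that the values of $\N_{\widehat K/\Q}$ on CRT lifts $\xi$ as above are prime with the density predicted by the prime number theorem, i.e.\ $\Omega(1/\log r)$. Then Step \eqref{st:q} succeeds after $O(\log r)$ trials on average. The extra conditions in Step \eqref{st:pi} pose no further difficulty: the primes $q$ ramifying in $K$ must divide the fixed discriminant of $K$, so there are only finitely many exceptions; and by primitivity of $\Psi$, the locus of $\xi\in\widehat K$ on which $N_\Psi(\xi)$ lies in some proper subfield of $K$ is a finite union of proper Zariski-closed subsets, hit by at most an $O(1/r)$ fraction of the lifts.

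The main obstacle is precisely the heuristic on prime values of the norm form $\N_{\widehat K/\Q}$: proving this unconditionally is a long-standing open problem, already in the one-variable case of the Cocks--Pinch algorithm ($g=1$). Granting this heuristic, the remainder of the argument is a routine composition of standard polynomial-time primitives for arithmetic in the fixed CM-field $K$ and its reflex.
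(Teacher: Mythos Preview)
Your proposal is correct and follows essentially the same route as the paper: polynomial per-iteration cost, a heuristic $O(\log r)$ bound on the number of iterations via the prime number theorem applied to the norm form, and an argument that the side conditions in Step~\eqref{st:pi} fail only on a negligible set.

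The one place where the paper is more careful is precisely those side conditions. For unramifiedness, you note that only finitely many primes ramify in $K$, but you should also observe (as the paper does) that $q\to\infty$ with $r$, since $r\mid \N_{K/\Q}(\pi-1)\le(\sqrt q+1)^{2g}$; this is what guarantees those finitely many bad primes are eventually avoided. For the condition that $\pi$ generate $K$, the paper does not appeal to a Zariski-closure argument but instead writes down an explicit combinatorial condition on the residues---namely $\prod_i(\alpha_i/\beta_i)^{v_i}\ne1$ for every $v\in\{0,1\}^{\ghat}$ other than $(0,\ldots,0)$ and $(1,\ldots,1)$---shows it holds with probability at least $1-(2^{\ghat}-2)/(r-1)$, and then verifies directly from the definition of the reflex that this condition forces $K_0\subset\Q(\pi)$ and hence $\Q(\pi)=K$. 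Your Zariski argument is in the same spirit, but the properness of the bad locus (i.e., the existence of \emph{some} $\xi$ whose type norm generates $K$) still needs to be justified; note also that the relevant hypothesis is the primitivity of $\Phi$ rather than of $\Psi$, since the reflex type $\Psi$ is automatically primitive.
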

\begin{proof}
The algorithm consists of a precomputation for the
field $K$ in Steps (1)--(3), followed by a loop in Steps
\eqref{st:random}--\eqref{st:q} that is performed until an element
$\xi$ is found that has prime norm $N_{\widehat K/\Q}(\xi)=q$,
and we also find in Step \eqref{st:pi} that
$q$ is unramified in $K$ and the type norm $\pi=N_\Psi(\xi)$ generates $K$.

The primality condition in Step \eqref{st:q} is the `true'
condition that becomes harder to achieve with increasing $r$,
whereas the conditions in Step \eqref{st:pi}, which are
necessary to guarantee correctness of the output, are so
extremely likely to be fulfilled
(especially in cryptographic applications where $K$ is small and $r$ is large)
that they will hardly ever fail in practice
and only influence the run time by a constant factor.

As $\xi$ is computed in Step \eqref{st:crt} as the lift to $\O_{\widehat K}$
of an element $\overline{\xi}\in\O_{\widehat K}/r\O_{\widehat K}\cong
(\F_r)^{2\widehat g}$,
its norm can be bounded by a constant multiple of $r^{2\widehat{g}}$.
Heuristically,  $q=\N_{\widehat{K}/\Q}(\xi)$ behaves as a random number, so
by the prime number theorem it will be prime with probability
at least $(2\widehat{g}\log r)^{-1}$,
and we expect that we need to repeat the loop in Steps
\eqref{st:random}--\eqref{st:q} about $2\widehat{g}\log r$ times
before finding $\xi$ of prime norm $q$.
As each of the steps is polynomial in $\log r$, so is the expected run time
up to Step \eqref{st:q}, and we are done if we show that the conditions
in Step \eqref{st:pi} are met with some positive probability if
$K$ is fixed and $r$ is sufficiently large.

For $q$ being unramified in $K$, one simply notes that
only finitely many primes ramify in the field $K$ (which is fixed)
and that $q$ tends to infinity with $r$,
since $r$ divides $\N_{K/\Q}(\pi -1) \le (\sqrt{q}+1)^{2g}$.

Finally, we show that $\pi$ generates $K$ with probability
tending to $1$ as $r$ tends to infinity.
Suppose that for every vector $v\in\{0,1\}^{\ghat}$ that is not all 0 or 1,
we have \addtocounter{theorem}{1}
\begin{equation}\label{e:pigeneratesk}
\textstyle \prod_{i=1}^{\ghat}(\alpha_i/\beta_i)^{v_i}\not=1.
\end{equation}
This set of $2^{\ghat}-2$ (dependent) conditions on the
$2\ghat-2$ independent random variables $\alpha_i, \beta_i$ for
$1 \le i<\ghat$
is satisfied with probability at least $1-(2^{\ghat}-2)/(r-1)$.
For any automorphism $\phi$ of $L$, the set
$\phi\circ \Psi$ is
a CM-type of $\widehat{K}$ and there is a $v\in\{0,1\}^{\ghat}$ such
that $v_i = 0$ if  $\phi\circ\Psi$ contains $\psi_i$ and $v_i=1$
otherwise.
Then $\alpha_i$ is $(\psi_i(\xi)\bmod\R)$, while $\beta_i$ is
$(\overline{\psi_i(\xi)}\bmod \R)$, so $(\pi/\phi(\pi)\bmod \R)$ is
$\prod_{i=1}^{\ghat}(\alpha_i/\beta_i)^{v_i}$.  By
\eqref{e:pigeneratesk}, if this expression is $1$
then $v=0$ or $v=1$, so $\phi\circ\Psi=\Psi$ or
$\overline{\phi}\circ\Psi=\Psi$, which by definition of
the reflex is equivalent to $\phi$ or $\overline{\phi}$
being trivial on $K$, i.e., to $\phi$ being trivial on
the maximal real subfield $K_0$. Thus if
\eqref{e:pigeneratesk} holds, then $\phi(\pi) = \pi$ implies that
$\phi$ is trivial on $K_0$, hence $K_0\subset \Q(\pi)$.
Since $\pi\in K$ is not real
(otherwise, $q=\pi^2$ ramifies in $K$),
this implies that $K = \Q(\pi)$.\qed
\end{proof}

In order to maximize the likelihood of finding prime norms, one
should minimize the norm of the lift $\xi$ computed in
the Chinese Remainder Step \eqref{st:crt}.
This involves minimizing a norm function of degree $2\widehat{g}$
in $2\widehat{g}$ integral variables, which is already
infeasible for $\widehat{g}=2$.

In practice, for given $r$, one lifts a standard basis of
$\O_{\widehat K}/r\O_{\widehat K}\cong (\F_r)^{2\widehat g}$
to $\O_{\widehat K}$. Multiplying those
lifts by integer representatives for the elements $\alpha_i$ and
$\beta_i$ of $\F_r$,
one quickly obtains lifts $\xi$.
We also choose, independently of $r$, a $\Z$-basis of
$\O_{\widehat{K}}$ consisting
of elements that are `small' with respect to all absolute values of
$\widehat{K}$.
We translate $\xi$ by multiples of $r$ to lie in $rF$, where $F$ is the
fundamental parallelotope in $\widehat K \otimes \RR$ consisting of
those elements that have coordinates in $(-\frac{1}{2}, \frac{1}{2}]$
with respect to our chosen basis.

If we denote the maximum on $F\cap \widehat K$ of all complex
absolute values of $\widehat K$ by $M_{\widehat{K}}$, we have
$
q=N_{\widehat{K}/\Q}(\xi)\leq (rM_{\widehat{K}})^{2\widehat{g}}.
$
For the $\rho$-value \eqref{rho}
we find
\addtocounter{theorem}{1}
\begin{equation}\label{eqn:upperboundrho}
\rho\leq 2g\widehat{g}(1+\log{M_{\widehat{K}}}/\log{r}),
\end{equation}
which is approximately $2g\ghat$ if $r$ gets large with respect to
$M_{\widehat{K}}$.
We would like $\rho$ to be small, but this is not what one obtains
by lifting random admissible choices of $\overline{\xi}$.
\begin{theorem}\label{thm:heur}
If the field $K$ is fixed and $r$ is large, we expect that (1) the output $q$
of Algorithm \ref{alg:construct-pi} yields $\rho\approx 2g\ghat$,
and (2) an optimal choice of $\xi\in\O_{\widehat{K}}$
satisfying the conditions of Theorem \ref{t:construct-pi}
yields $\rho\approx 2g$.
\end{theorem}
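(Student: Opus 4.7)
\emph{Part (1).} The upper bound \eqref{eqn:upperboundrho}, $\rho\le 2g\ghat(1+\log M_{\widehat K}/\log r)$, tends to $2g\ghat$ as $r\to\infty$, so it suffices to show this bound is essentially tight on average. The lift $\xi\in rF\cap\O_{\widehat K}$ produced in Steps (4)--(6) of Algorithm \ref{alg:construct-pi} is the lift to the parallelotope $rF$ of an essentially uniform residue $\bar\xi\in(\O_{\widehat K}/r\O_{\widehat K})^\times$, and hence itself essentially uniform in $rF$. For such a uniform point each of the $2\ghat$ embeddings $|\phi(\xi)|$ has typical size $\Theta(r)$, so by the law of large numbers $\log q=\sum_\phi\log|\phi(\xi)|=2\ghat\log r+O(1)$ with probability bounded below. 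Conditioning on ``$q$ is prime'' (an event of probability $\sim 1/\log r$ by PNT) does not shift the typical size of $q$, and we obtain $\rho\approx 2g\ghat$.

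\emph{Part (2).} Here I would bound the optimum by a Minkowski-volume heuristic. Call $\xi\in\O_{\widehat K}$ \emph{balanced} if $|\phi(\xi)|\asymp r^{1/\ghat}$ for every embedding $\phi$; such $\xi$ satisfy $|N_{\widehat K/\Q}(\xi)|=\prod_\phi|\phi(\xi)|\asymp r^2$ and yield $\rho\approx 2g$. The Minkowski box $B_c=\{x\in\widehat K\otimes\RR:|\phi(x)|\le cr^{1/\ghat}\}$ has Minkowski volume $\asymp c^{2\ghat}r^2$, and hence contains $\asymp c^{2\ghat}r^2$ lattice points of $\O_{\widehat K}$. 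The conditions \eqref{eq:conditions} pick out $(r-1)^{2\ghat-2}\varphi(k)$ out of $r^{2\ghat}$ cosets of $r\O_{\widehat K}$, a fraction $\sim r^{-2}\varphi(k)$. Assuming that the residue classes of the $\asymp r^2$ balanced lattice points equidistribute in $\O_{\widehat K}/r\O_{\widehat K}$, we expect $\asymp c^{2\ghat}\varphi(k)$ balanced $\xi$ to meet the constraints. Taking $c$ a suitable constant (inflated by an $O(\log r)$ factor to force the norm to be prime via PNT) produces a valid $\xi$ with $|N(\xi)|\approx r^2$, hence $\rho\approx 2g$.

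\emph{Main obstacle.} The decisive non-rigorous step is the equidistribution claim: that the balanced lattice points of $B_c$ are approximately uniform modulo $r\O_{\widehat K}$. A rigorous proof would require character-sum bounds on $(\O_{\widehat K}/r\O_{\widehat K})^\times$, or counts of algebraic integers of small Minkowski height in prescribed residue classes in the spirit of Schmidt, neither of which appears available in full generality for CM-fields of arbitrary degree. Since Theorem \ref{thm:heur} is expressly stated as a heuristic, this gap is to be expected; the heuristic is supported by the numerical examples of Section \ref{s:examples}.
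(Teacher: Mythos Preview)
Your argument for Part~(1), and for the \emph{upper} half of Part~(2) (existence of a $\xi$ with $\rho\approx 2g$), follows the same line as the paper: the upper bound on $\rho$ comes from \eqref{eqn:upperboundrho}, the lower bound for a random $\xi$ comes from a volume estimate in $rF\subset\widehat K\otimes\RR$, and the existence of a small-norm $\xi$ satisfying \eqref{eq:conditions} comes from counting lattice points in a Minkowski box of covolume comparable to $r^2$ and multiplying by the density $r^{-2}\varphi(k)$ of admissible residues. The paper packages these as Lemmas \ref{l:lower-max}, \ref{lem:aux}, and \ref{l:upper-min}, and its stated heuristic assumption (``treat the elements of $H_{r,k}$ as random elements of $rF$'') is exactly your equidistribution hypothesis, so the ``main obstacle'' you flag is not an additional gap but the acknowledged heuristic of the theorem.

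There is, however, a genuine omission: you never argue the \emph{lower} half of Part~(2), namely that no admissible $\xi$ achieves $\rho$ significantly below $2g$, i.e.\ $N_{\widehat K/\Q}(\xi)<r^{2-\varepsilon}$. Your Minkowski-box count only catches \emph{balanced} $\xi$; an unbalanced $\xi$ (one absolute value tiny, others large) could in principle have small norm while lying far outside the box $B_c$. The paper closes this gap in Lemma~\ref{l:lower-min} by exploiting the unit group: if $\xi_0$ has small norm, one can multiply by a totally real unit of norm one (which preserves both $N_{\widehat K/\Q}(\xi_0)$ and the congruences \eqref{eq:conditions}) to move $\xi_0$ into a fundamental domain for the log-unit lattice, forcing all absolute values to be comparable. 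Then $\xi_0$ would lie in a box of volume $\ll r^{2-\varepsilon}$, which by the density heuristic contains $\ll r^{-\varepsilon}$ admissible points --- hence none for large $r$. Without this unit-group step your argument does not rule out $\rho<2g$.
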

\begin{open}
Find an efficient algorithm to compute an element $\xi\in\O_{\widehat{K}}$
satisfying the conditions of Theorem \ref{t:construct-pi} for which
$\rho\approx 2g$.
\end{open}
We will prove Theorem \ref{thm:heur} via a series of lemmas.
Let $H_{r,k}$ be the subset of the parallelotope
$rF\subset \widehat{K}\otimes\RR$ consisting of
those $\xi\in rF\cap \O_{\widehat{K}}$ that satisfy the two
congruence conditions \eqref{eq:conditions}
for a given embedding degree $k$.
Heuristically, we will treat the elements of $H_{r,k}$ as random elements of
$rF$ with respect to the distributions of
complex absolute values and norm functions.
We will also use the fact that, as $\widehat K$ is totally complex
of degree $2\widehat g$, the
$\RR$-algebra $\widehat{K}\otimes\RR$ is naturally isomorphic to
$\C^{\widehat{g}}$.  We assume throughout that $g \ge 2$.

\begin{lemma}
\label{l:lower-max}
   Fix the field $K$. Under our heuristic assumption,
there exists a constant $c_1>0$ such that for all
$\varepsilon>0$, the probability
that a random $\xi\in H_{r,k}$ satisfies $q<r^{2(\ghat-\varepsilon)}$
is less than $c_1 r^{-\varepsilon}$.
\end{lemma}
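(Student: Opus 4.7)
The plan is to replace the random lattice point $\xi \in H_{r,k}$ by a uniformly distributed continuous variable in the parallelotope $rF \subset \widehat{K}\otimes\RR$, and to estimate the probability $P(q < T)$, with $T := r^{2(\ghat - \varepsilon)}$, by the ratio of Lebesgue volumes
\begin{equation*}
\frac{\mathrm{vol}\{\xi \in rF : |\N_{\widehat K/\Q}(\xi)| < T\}}{\mathrm{vol}(rF)}.
\end{equation*}
The heuristic assumption from the paper justifies this passage, since the conditions defining $H_{r,k}$ are modular (congruences modulo primes above $r$) while the norm bound is archimedean, so the two should be asymptotically independent.

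Next I would use the isomorphism $\widehat K \otimes \RR \cong \C^{\ghat}$ given by $\Psi$, under which $|\N_{\widehat K/\Q}(\xi)| = \prod_{i=1}^{\ghat}|\psi_i(\xi)|^2$ and $rF$ sits inside the polydisk of polyradius $M_{\widehat K}\,r$. Passing to polar radii $u_i = |\psi_i(\xi)|$ and making the change of variables $u_i = M_{\widehat K}\,r\,e^{-y_i}$ with $y_i \ge 0$, the constraint $\prod u_i^2 < T$ becomes $\sum y_i > L$ with $L := \tfrac12 \log((M_{\widehat K} r)^{2\ghat}/T)$, and the integrand $\prod u_i\,du_i$ transforms into a constant multiple of $(M_{\widehat K} r)^{2\ghat} e^{-2\sum y_i}\,dy_1\cdots dy_{\ghat}$. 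The resulting tail integral for $\ghat$ i.i.d.\ exponentials evaluates via the Gamma density $s^{\ghat-1}e^{-s}/(\ghat-1)!$ to give
\begin{equation*}
\mathrm{vol}\bigl\{\xi \in rF : |\N_{\widehat K/\Q}(\xi)| < T\bigr\} \le C'_K\, T\, L^{\ghat - 1}
\end{equation*}
with $C'_K$ depending only on $K$.

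Dividing by $\mathrm{vol}(rF) = r^{2\ghat}\,\mathrm{vol}(F)$ and substituting $T = r^{2(\ghat - \varepsilon)}$, so that $L \le \varepsilon \log r + C$ for a constant $C$ depending only on $K$, yields $P(q < T) \le C_K\, r^{-2\varepsilon}(\varepsilon \log r + C)^{\ghat - 1}$. The key final trick is to set $x := \varepsilon \log r$ and observe that
\begin{equation*}
r^{-\varepsilon}(\varepsilon \log r + C)^{\ghat - 1} = e^{-x}(x + C)^{\ghat - 1}
\end{equation*}
is a function of $x$ alone with finite maximum on $[0,\infty)$, independent of $\varepsilon$ and $r$. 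Pulling one factor of $r^{-\varepsilon}$ out of $r^{-2\varepsilon}$ and applying this uniform bound gives $P(q < T) \le c_1 r^{-\varepsilon}$ with $c_1$ depending only on $K$, as required.

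The main obstacle will be the polar-coordinate volume computation: reducing the region $\{\prod u_i^2 < T\}$ to an exponential tail estimate requires careful bookkeeping with the Jacobian $(M_{\widehat K} r)^{2\ghat}$ and the classical asymptotic $\int_L^\infty s^{\ghat-1}e^{-s}\,ds \sim L^{\ghat-1}e^{-L}$, and one must track $M_{\widehat K}$ and $\mathrm{vol}(F)$ so that they absorb into $c_1$. The optimization at the end --- replacing $\varepsilon\log r$ by a single variable $x$ --- is what converts the naive estimate $r^{-2\varepsilon}(\log r)^{\ghat - 1}$, which is weaker than $r^{-\varepsilon}$ for small $\varepsilon$, into the uniform bound stated in the lemma.
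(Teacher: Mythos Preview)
Your argument is correct, but it is considerably more elaborate than the paper's. The paper uses a one-line trick instead of your exponential change of variables: on the region $\{\prod u_i^2\le T\}$ the integrand $\prod u_i$ is itself bounded by $\sqrt{T}=r^{\ghat-\varepsilon}$, so
\[
\int_{\substack{u\in[0,rM_{\widehat K}]^{\ghat}\\ \prod u_i^2\le T}}\prod u_i\,du
\ \le\ r^{\ghat-\varepsilon}\int_{[0,rM_{\widehat K}]^{\ghat}}du
\ =\ (rM_{\widehat K})^{\ghat}\,r^{\ghat-\varepsilon},
\]
and dividing by $\mathrm{vol}(rF)$ gives $c_1 r^{-\varepsilon}$ directly, with no logarithmic factor to remove and no optimisation in $x=\varepsilon\log r$ needed. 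Your route first obtains the sharper bound $C_K\,r^{-2\varepsilon}(\varepsilon\log r+C)^{\ghat-1}$ via the Gamma tail and then spends one factor of $r^{-\varepsilon}$ to kill the polylog; this recovers exactly the paper's conclusion but at the cost of extra bookkeeping. (A small point: your inequality $\mathrm{vol}\le C'_K\,T\,L^{\ghat-1}$ should really read $C'_K\,T\,(L+1)^{\ghat-1}$ or $C'_K\,T\max(1,L)^{\ghat-1}$, since the incomplete Gamma tail is a polynomial in $L$ of degree $\ghat-1$ with nonzero lower-order terms; this does not affect the rest of your argument.)
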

\begin{proof}
   The probability that a random $\xi$ lies in the set
   $V=\{z\in\C^\ghat: \prod \abs{z_i}^2\leq r^{2(\ghat -\varepsilon)}\}\cap rF$
   is the quotient of the volume of $V$ by the volume
   $2^{-\ghat}\sqrt{\abs{\Delta_{\widehat{K}}}}r^{2\ghat}$
   of $rF$, where $\Delta_{\widehat K}$ is the discriminant of $\widehat K$.
   Now $V$ is contained inside
   $W=\{z\in \C^\ghat: \prod \abs{z_i}^2\leq r^{2(\ghat -\varepsilon)},
     \abs{z_i}\leq rM_{\widehat{K}}\}$,
   which has volume
   $$(2\pi)^\ghat \mathop{\int}_{\hbox to0pt{\hss $\scriptstyle\substack{
x\in [0,rM_{\widehat{K}}]^\ghat \\  \prod \abs{x_i}^2\leq r^{2(\ghat -\varepsilon)}
     }$}} \prod|x_i| dx
   \ \   <\ \ (2\pi)^\ghat
   \mathop{\int}_{\hbox to0pt{\hss $\scriptstyle x\in
[0,rM_{\widehat{K}}]^\ghat$}}
  r^{\ghat-\varepsilon}dx
\ \ =
\ \  (2\pi M_{\widehat{K}})^\ghat r^{2\ghat-\varepsilon},$$
   so a random $\xi$ lies in $V$ with probability less than
   $(4\pi M_{\widehat{K}})^\ghat\abs{\Delta_{\widehat{K}}}^{-1/2}
r^{-\varepsilon}$.\qed
\end{proof}

\begin{lemma}\label{lem:aux}
There exists a number $Q_{\widehat{K}}$, depending only on $\widehat{K}$,
such that for any positive real number $X<rQ_{\widehat{K}}$,
the expected number of
$\xi\in H_{r,k}$ with all absolute values below $X$ is
$$\frac{\varphi(k)(2\pi)^{\widehat{g}}}
{\abs{\Delta_{\widehat K}}}\frac{X^{2\ghat}}{r^{2}}.$$
\end{lemma}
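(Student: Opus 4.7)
The plan is to express the expected count as the product of two factors: the number of admissible residue classes in $\O_{\widehat K}/r\O_{\widehat K}$, and the fraction of the fundamental domain $rF$ occupied by the polydisk $\{z\in\C^\ghat : |z_i|\leq X\}$. This is to be combined with a separate geometric step that fixes the constant $Q_{\widehat K}$ so that, for $X<rQ_{\widehat K}$, the polydisk is genuinely contained in $rF$ and no admissible lift is silently missed.

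First I would use the factorization \eqref{rfac} and the Chinese Remainder Theorem to identify $\O_{\widehat K}/r\O_{\widehat K} \cong \F_r^{2\ghat}$ via $\xi \mapsto ((\alpha_\psi, \beta_\psi))_{\psi\in\Psi}$. Conditions \eqref{eq:conditions} then force $\alpha_\psi, \beta_\psi \in \F_r^*$, $\prod_\psi\alpha_\psi = 1$, and $\prod_\psi\beta_\psi$ equal to some primitive $k$-th root of unity. This gives $(r-1)^{\ghat-1}$ admissible $\alpha$-tuples and $\varphi(k)(r-1)^{\ghat-1}$ admissible $\beta$-tuples, and since each residue class has a unique representative in the fundamental domain $rF$ for $r\O_{\widehat K}$, we get $|H_{r,k}| = \varphi(k)(r-1)^{2\ghat-2}$.

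Next, under the isomorphism $\widehat K\otimes\RR \cong \C^\ghat$, the polydisk has Lebesgue volume $\pi^\ghat X^{2\ghat}$, while $rF$ has volume $2^{-\ghat}\sqrt{|\Delta_{\widehat K}|}\,r^{2\ghat}$ as recalled in the proof of Lemma \ref{l:lower-max}. Invoking the heuristic that elements of $H_{r,k}$ are uniformly distributed in $rF$, the expected number of $\xi\in H_{r,k}$ inside the polydisk is $|H_{r,k}|$ times the volume ratio, and this simplifies to the stated formula using the asymptotic $(r-1)^{2\ghat-2}\sim r^{2\ghat-2}$ as $r\to\infty$.

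The main obstacle is producing the constant $Q_{\widehat K}$: the volume-ratio step is only valid when the polydisk sits inside $rF$, since otherwise we would undercount by missing those residue classes whose unique $rF$-representative happens to lie outside the polydisk. Because $F$ is a bounded neighborhood of $0$ in $\widehat K\otimes\RR$ (its coordinates with respect to the fixed integral basis of $\O_{\widehat K}$ all lie in $(-\tfrac12,\tfrac12]$), a compactness argument yields a constant $Q_{\widehat K}>0$ such that the polydisk of polyradius $Q_{\widehat K}$ is contained in $F$; scaling by $r$ then gives the required containment whenever $X<rQ_{\widehat K}$. Note that $Q_{\widehat K}$ depends only on $\widehat K$ and the chosen basis, as required.
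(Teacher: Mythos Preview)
Your proposal is correct and follows essentially the same route as the paper: compute $\#H_{r,k}$, take the ratio of the polydisk volume $(\pi X^2)^{\ghat}$ to the volume $2^{-\ghat}\sqrt{|\Delta_{\widehat K}|}\,r^{2\ghat}$ of $rF$, and choose $Q_{\widehat K}$ so that $V_X\subset rF$ for $X<rQ_{\widehat K}$. Your definition of $Q_{\widehat K}$ (a polyradius whose polydisk sits in $F$) is the contrapositive of the paper's (a lower bound for $\max_i|z_i|$ on the complement of $F$), and your count $\varphi(k)(r-1)^{2\ghat-2}$ is slightly more careful than the paper's $\varphi(k)r^{2\ghat-2}$, but both collapse to the same heuristic formula.
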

\begin{proof}
Let $Q_{\widehat{K}}>0$ be a lower bound on
$\widehat{K} \setminus F$ for the maximum of all complex absolute
values, so the box $V_X\subset\widehat{K}\otimes\RR$ consisting of
those elements that have all absolute values below $X$ lies
completely inside $(X/Q_{\widehat{K}})F \subset rF$.
  The volume of
$V_X$ in $\widehat{K}\otimes\RR$ is $(\pi X^2)^{\widehat{g}}$, while
$rF$ has volume $2^{-\ghat}\sqrt{\abs{\Delta_{\widehat
K}}}r^{2\ghat}$.
The expected number of $\xi\in H_{r,k}$ satisfying $\abs{\xi}<X$
for all absolute values is
$\#H_{r,k}=r^{2\widehat{g}-2}\varphi(k)$ times the quotient of these
volumes.
\qed\end{proof}

\begin{lemma}
\label{l:upper-min}
Fix the field $K$. Under our heuristic assumption,
there exists a constant $c_2$ such that for all positive
$\varepsilon<2\ghat-2$,
if $r$ is sufficiently large, then we expect the number of
$\xi\in H_{r,k}$ satisfying $N_{\widehat{K}/\Q}(\xi)<r^{2+\varepsilon}$
to be at least $c_2r^\varepsilon$.
\end{lemma}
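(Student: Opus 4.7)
The plan is to deduce this lemma directly from Lemma \ref{lem:aux} by converting a bound on the norm of $\xi$ into a bound on all the complex absolute values of $\xi$. Since $\widehat K$ is totally imaginary of degree $2\ghat$, for any $\xi\in\O_{\widehat K}$ we have
\[
\N_{\widehat{K}/\Q}(\xi)=\prod_{i=1}^{\ghat}\abs{\psi_i(\xi)}^2,
\]
so if every complex absolute value of $\xi$ is less than some $X>0$, then $\N_{\widehat{K}/\Q}(\xi)<X^{2\ghat}$.

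First I would choose $X=r^{(2+\varepsilon)/(2\ghat)}$, so that $X^{2\ghat}=r^{2+\varepsilon}$. The hypothesis $\varepsilon<2\ghat-2$ forces $(2+\varepsilon)/(2\ghat)<1$, and therefore $X/r=r^{(2+\varepsilon)/(2\ghat)-1}\to 0$ as $r\to\infty$. In particular, for $r$ larger than some bound depending only on $\widehat{K}$ (namely on $Q_{\widehat K}$), we have $X<rQ_{\widehat K}$, so Lemma \ref{lem:aux} is applicable at this value of $X$.

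Next I would apply Lemma \ref{lem:aux}: the expected number of $\xi\in H_{r,k}$ all of whose complex absolute values are less than $X$ is
\[
\frac{\varphi(k)(2\pi)^{\widehat g}}{\abs{\Delta_{\widehat K}}}\cdot\frac{X^{2\ghat}}{r^2}
=\frac{\varphi(k)(2\pi)^{\widehat g}}{\abs{\Delta_{\widehat K}}}\,r^{\varepsilon}.
\]
Every such $\xi$ satisfies $\N_{\widehat{K}/\Q}(\xi)<X^{2\ghat}=r^{2+\varepsilon}$, so these $\xi$ are counted by the quantity in the statement. Setting $c_2=(2\pi)^{\widehat g}/\abs{\Delta_{\widehat K}}$ and using $\varphi(k)\ge 1$ gives the desired lower bound.

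There is no real obstacle here once Lemma \ref{lem:aux} is in hand; the only thing to watch is that the choice of $X$ stays below $rQ_{\widehat K}$, which is exactly what the hypothesis $\varepsilon<2\ghat-2$ guarantees for sufficiently large $r$. The constant $c_2$ depends only on $\widehat K$, hence only on the fixed field $K$, as required.
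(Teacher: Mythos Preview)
Your proof is correct and follows essentially the same approach as the paper: choose $X=r^{(2+\varepsilon)/(2\ghat)}$ (which the paper writes as $r^{1/\ghat+\varepsilon/(2\ghat)}$), verify $X<rQ_{\widehat K}$ for large $r$ via the hypothesis $\varepsilon<2\ghat-2$, and invoke Lemma~\ref{lem:aux} together with the observation that every $\xi$ counted there has $\N_{\widehat K/\Q}(\xi)<X^{2\ghat}=r^{2+\varepsilon}$. Your write-up simply makes explicit the constant $c_2=(2\pi)^{\ghat}/\abs{\Delta_{\widehat K}}$ and the use of $\varphi(k)\ge 1$, details the paper leaves implicit.
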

\begin{proof}
Any $\xi$ as in Lemma \ref{lem:aux} satisfies
$N_{\widehat{K}/\Q}(\xi)<X^{2\ghat}$, so we apply the lemma
to $X=r^{(1/\ghat+\varepsilon/2\ghat)}$, which is less than $rQ_{\widehat{K}}$
for large enough $r$ and $\epsilon<2\ghat-2$. \qed
\end{proof}

\begin{lemma}
\label{l:lower-min}
Fix the field $K$. Under our heuristic assumption,
for all $\varepsilon>0$,
if $r$ is large enough, we expect there to be no
$\xi\in H_{r,k}$ satisfying $N_{\widehat{K}/\Q}(\xi)<r^{2-\varepsilon}$.
\end{lemma}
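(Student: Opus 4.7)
The plan is to apply the same heuristic volume-counting argument used in Lemmas \ref{l:lower-max} and \ref{lem:aux}: under the heuristic, the expected number of $\xi\in H_{r,k}$ with $N_{\widehat{K}/\Q}(\xi)<r^{2-\varepsilon}$ equals $\#H_{r,k}=r^{2\widehat g-2}\varphi(k)$ times $\mathrm{vol}(W)/\mathrm{vol}(rF)$, where $W=\{\xi\in rF:N_{\widehat K/\Q}(\xi)<r^{2-\varepsilon}\}$ and $\mathrm{vol}(rF)=2^{-\widehat g}\sqrt{\abs{\Delta_{\widehat K}}}\,r^{2\widehat g}$. It is therefore enough to establish the bound $\mathrm{vol}(W)=O(r^{2-\varepsilon}(\log r)^{\widehat g-1})$, with an implied constant depending only on $\widehat K$; the resulting expected count $O_{\widehat K,k}(r^{-\varepsilon}(\log r)^{\widehat g-1})$ then tends to $0$ as $r\to\infty$, justifying the statement.

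First I would identify $\widehat K\otimes\RR$ with $\C^{\widehat g}$, so that $N_{\widehat K/\Q}(z)=\prod_i\abs{z_i}^2$ and $rF$ is contained in the polydisc $\{\abs{z_i}\le rM_{\widehat K}\}$. Then $W$ is contained in
\[
U=\bigl\{z\in\C^{\widehat g}:\textstyle\prod_i\abs{z_i}\le r^{1-\varepsilon/2},\ \abs{z_i}\le rM_{\widehat K}\bigr\},
\]
and it suffices to bound $\mathrm{vol}(U)$. The key step is a logarithmic substitution: after passing to polar coordinates $z_i=\rho_ie^{i\theta_i}$, the change of variables $u_i=\log(rM_{\widehat K}/\rho_i)\ge 0$ turns the product constraint into the half-space condition $\sum u_i\ge C$ with $C=(\widehat g-1+\varepsilon/2)\log r+\widehat g\log M_{\widehat K}$, and collapses the integral (up to factors depending only on $\widehat K$) into $\int_{u_i\ge 0,\,\sum u_i\ge C}e^{-2\sum u_i}\,du$, which is a constant multiple of the Gamma-tail probability $\Pr[S_{\widehat g}\ge 2C]$ for $S_{\widehat g}\sim\mathrm{Gamma}(\widehat g,1)$.

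The final step is to invoke the classical asymptotic $\Pr[S_n\ge t]\sim t^{n-1}e^{-t}/(n-1)!$ as $t\to\infty$: the resulting exponential factor $e^{-2C}=M_{\widehat K}^{-2\widehat g}\,r^{2-2\widehat g-\varepsilon}$ cancels the $(rM_{\widehat K})^{2\widehat g}$ picked up from the radial Jacobian, leaving $\mathrm{vol}(U)=O_{\widehat K}(r^{2-\varepsilon}(\log r)^{\widehat g-1})$, exactly as required. The main obstacle is precisely this volume estimate for $U$; once the logarithmic substitution reduces it to a Gamma-tail bound the remaining work is careful bookkeeping of the constants involving $M_{\widehat K}$ and $\Delta_{\widehat K}$, and the polynomial factor $(\log r)^{\widehat g-1}$ is easily absorbed by $r^{-\varepsilon}$ for $r$ sufficiently large.
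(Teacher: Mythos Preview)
Your argument is correct and takes a genuinely different route from the paper. The paper does \emph{not} estimate the volume of the small-norm region directly; instead it invokes the unit group $U$ of norm-one units in the maximal real subfield of $\widehat K$. Multiplying a putative minimal-norm $\xi_0\in H_{r,k}$ by an appropriate element of $U$ (which preserves both the norm and the congruence conditions \eqref{eq:conditions}) moves the vector of log-absolute-values into a fixed fundamental domain for the Dirichlet lattice, where all absolute values of $\xi_0$ are comparable. Then $N_{\widehat K/\Q}(\xi_0)<r^{2-\varepsilon}$ forces every absolute value below $X\asymp r^{1/\widehat g-\varepsilon/2\widehat g}$, and one simply quotes Lemma \ref{lem:aux} to see that the expected number of such points is $O(r^{-\varepsilon})$. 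The advantage of the paper's route is that it recycles Lemma \ref{lem:aux} with no new integration; the advantage of yours is that it avoids the unit-group reduction entirely and gives an explicit sharp volume bound $O(r^{2-\varepsilon}(\log r)^{\widehat g-1})$. Note that the crude estimate used in Lemma \ref{l:lower-max} (bounding $\prod|x_i|$ by its supremum on the region) would only yield $\mathrm{vol}(U)=O(r^{\widehat g+1-\varepsilon/2})$ here, which is far too weak, so your logarithmic substitution and Gamma-tail asymptotic really are doing essential work.
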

\begin{proof}
Let $\widehat{\mathcal{O}}$ be the ring of integers of the maximal
real subfield
of $\widehat{K}$. Let $U$ be the subgroup of norm one elements of
$\widehat{\mathcal{O}}^*$.
We embed $U$ into
$\RR^{\ghat}$ by mapping $u\in U$ to the vector $l(u)$ of logarithms
of absolute
values of $u$. The image is a complete lattice in the
$(\ghat-1)$-dimensional space of vectors with coordinate sum $0$.
Fix a fundamental parallelotope $F'$ for this lattice.
Let $\xi_0$ be the element of $H_{r,k}$ of smallest norm.
Since the conditions \eqref{eq:conditions},
as well as the norm of $\xi_0$, are invariant
under multiplication by elements of $U$, we may assume
without loss of generality that $l(\xi_0)$ is
inside $F'+\C(1,\ldots,1)$.
Then every difference of two entries of $l(\xi_0)$ is bounded,
and hence every quotient of absolute values of $\xi_0$ is bounded from
below by a positive constant $c_3$ depending only on $K$. In
particular, if $m$ is the maximum of all absolute values of
$\xi_0$, then $N_{\widehat{K}/\Q}(\xi)>(c_3m)^{2\ghat}$.  Now suppose
$\xi_0$ has norm below $r^{2-\varepsilon}$.  Then all absolute
values of $\xi_0$ are below $X=r^{(1/\ghat-\varepsilon/2\ghat)}/c_3$,
and $X < rQ_{\widehat{K}}$ for $r$ sufficiently large.
Now Lemma \ref{lem:aux} implies that
the expected number of $\xi \in H_{r,k}$ with all absolute values
below $X$ is a constant times $r^{-\varepsilon}$,
so for any sufficiently large $r$ we expect there to be no such
$\xi$, a contradiction.
\qed
\end{proof}

\begin{proof}[of Theorem {\ref{thm:heur}}]
The upper bound $\rho \apprle 2g\ghat$ follows from \eqref{eqn:upperboundrho}.
Lemma \ref{l:lower-max} shows that for any $\varepsilon>0$, the
probability that $\rho$ is smaller than
$2g\ghat-\varepsilon$ tends to zero as $r$ tends to infinity,
thus proving the lower bound $\rho \apprge 2g\ghat$.
Lemma~\ref{l:upper-min} shows that for any $\varepsilon>0$, if $r$ is
sufficiently
large then we expect there to exist a $\xi$ with $\rho$-value at most
$2g+\varepsilon$,
thus proving the bound $\rho \apprle 2g$.
Lemma~\ref{l:lower-min} shows that
we expect $\rho>2g-\varepsilon$ for the optimal $\xi$,
which proves the bound $\rho\apprge 2g$.\qed
\end{proof}
For very small values of $r$ we are able to do a brute-force
search for the smallest $q$ by testing all possible values of $\alpha_1,\ldots,
\alpha_{\widehat{g}-1},\beta_1,\ldots,\beta_{\widehat{g}-1}$ in Step
\ref{st:random} of Algorithm
\ref{alg:construct-pi}.  We performed two such searches, one in dimension
2 and one in dimension 3.
The experimental results support our heuristic evidence that
$\rho\approx 2g$ is
possible
with a smart choice in the algorithm, and that $\rho\approx 2g\widehat{g}$ is
achieved with a randomized
algorithm.

\begin{ex}
Take $K = \Q(\zeta_5)$, and let $\Phi=\{\phi_1, \phi_2\}$ be the CM-type
of $K$ defined by $\phi_n (\zeta_5) = e^{2\pi i n/5}$.
We ran Algorithm \ref{alg:construct-pi} with $r = 1021$
and $k = 2$, and tested all possible values of $\alpha_1,\beta_1$.
The total number of primes $q$ found was $125578$, and the
corresponding $\rho$-values were distributed as follows:
\[
\includegraphics[width=5cm]{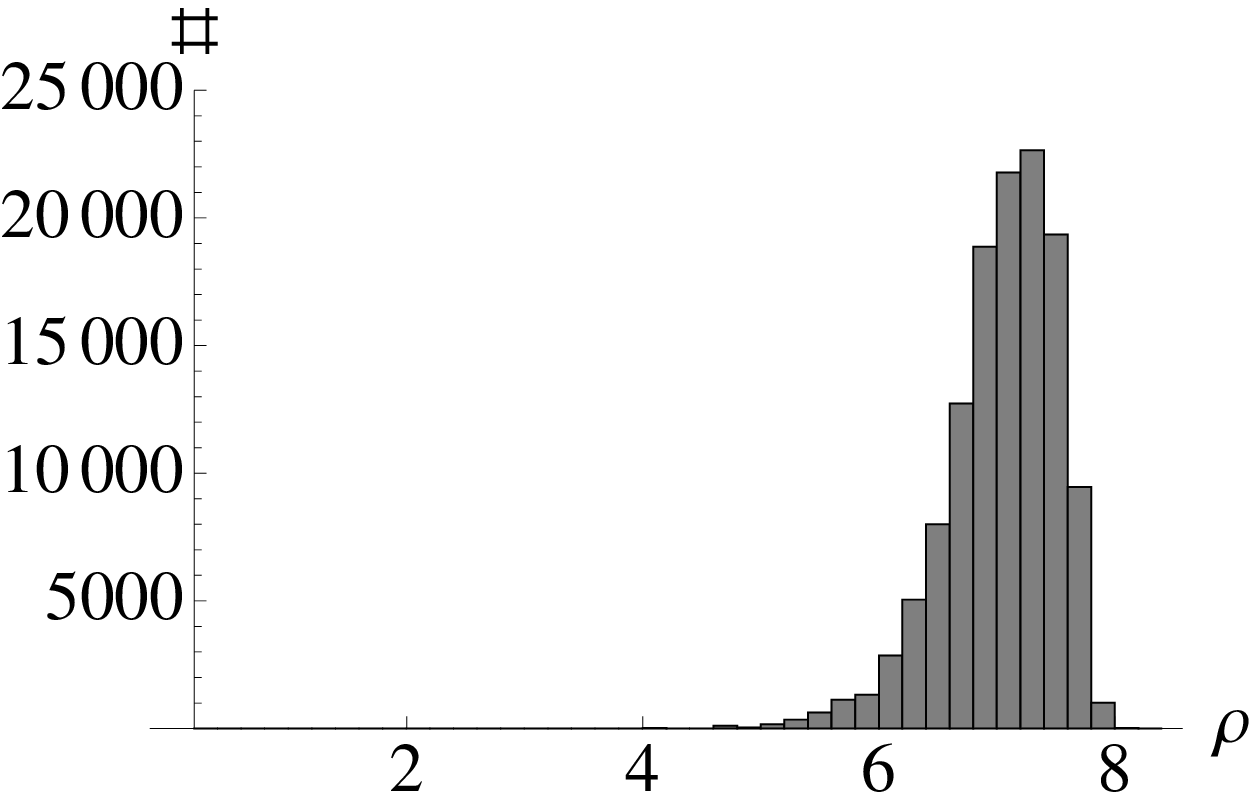}\quad\includegraphics[width=4.5cm]{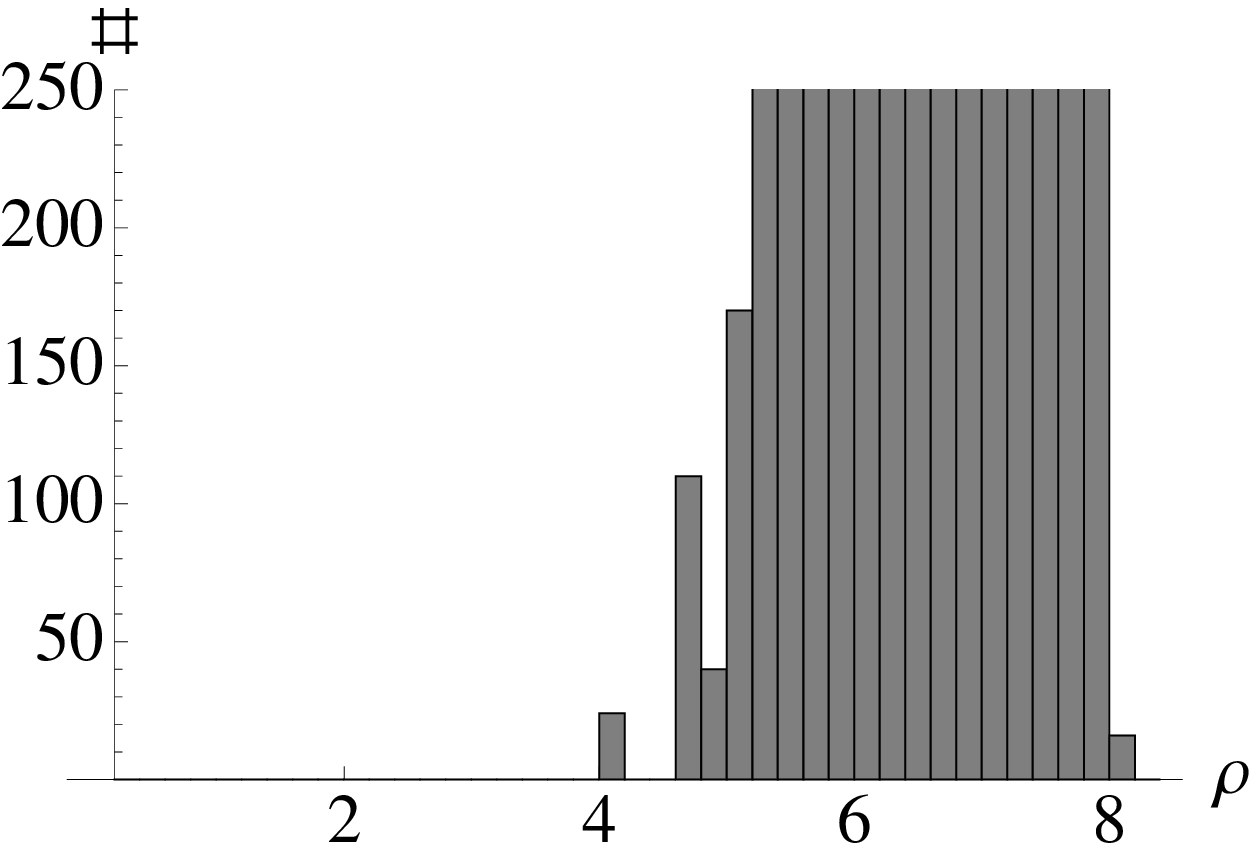}
\]
The smallest $q$ found was $2023621$, giving a $\rho$-value of $4.19$.
The curve over $\F=F_q$ for which the Jacobian has this $\rho$-value is
$y^2 = x^5 + 18$, and the number of points on its Jacobian is $4092747290896$.
\end{ex}

\begin{ex}
Take $K = \Q(\zeta_7)$, and let $\Phi=\{\phi_1, \phi_2, \phi_3\}$
be the CM-type of $K$ defined by $\phi_i (\zeta_7) = e^{2\pi i/7}$.
We ran Algorithm \ref{alg:construct-pi} with $r = 29$
and $k = 4$, and tested all possible values of
$\alpha_1,\alpha_2,\beta_1,\beta_2$.
The total number of primes $q$ found was $162643$, and the
corresponding $\rho$-values were distributed as follows:
\[
\includegraphics[width=5cm]{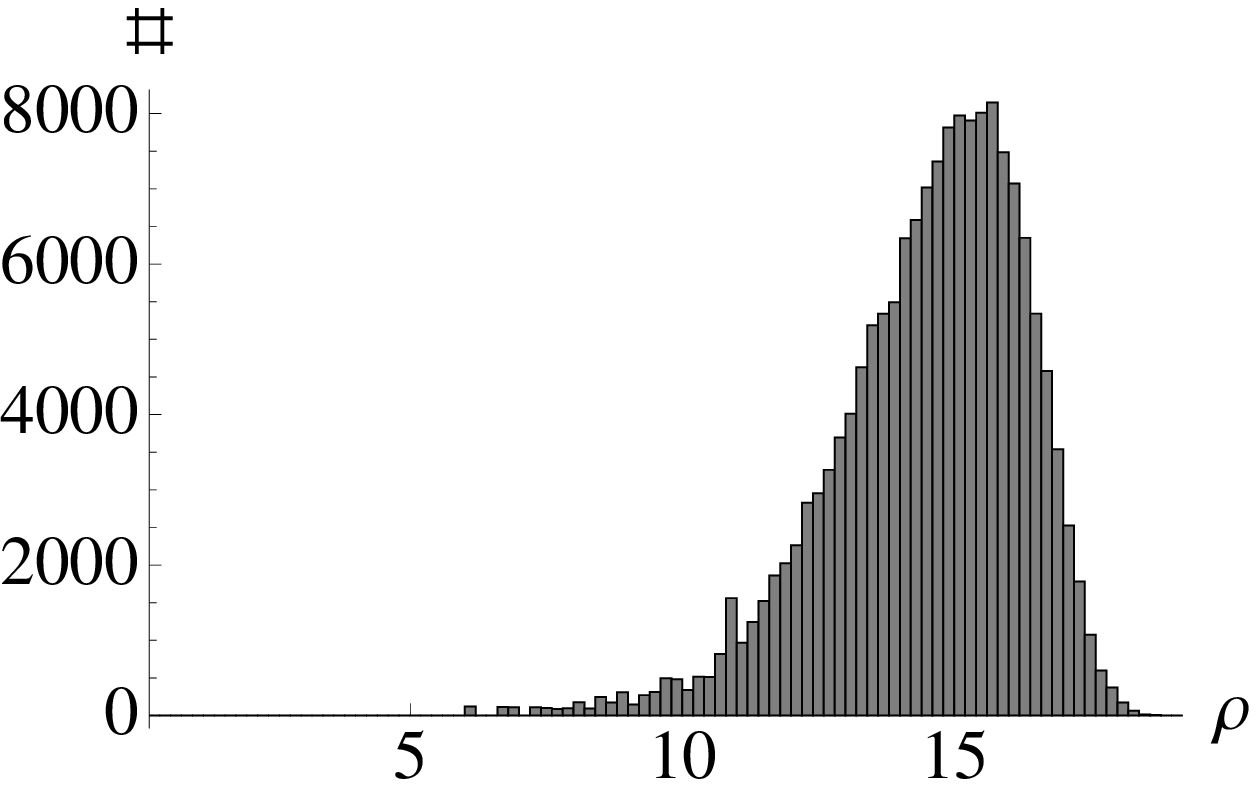}\quad\includegraphics[width=4.5cm]{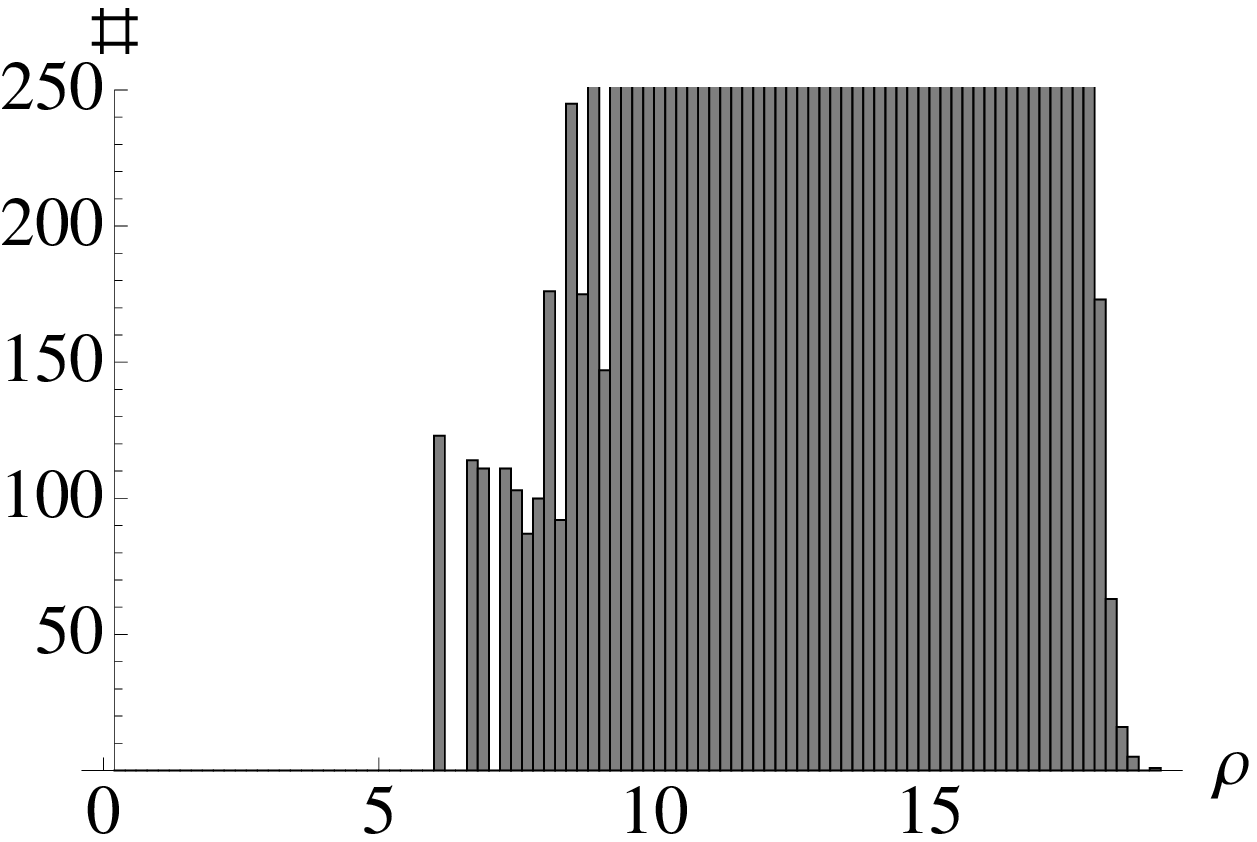}
\]
The smallest $q$ found was $911$, giving a $\rho$-value of $6.07$.
The curve over $\F=F_q$ for which the Jacobian has this $\rho$-value is
$y^2 = x^7 + 34$, and the number of points on its Jacobian is $778417333$.
\end{ex}

\begin{ex}
Take $K = \Q(\zeta_5)$, and let $\Phi=\{\phi_1, \phi_2\}$ be the
CM-type of $K$ defined by $\phi_i (\zeta_5) = e^{2\pi i/5}$. We ran
Algorithm \ref{alg:construct-pi} with $r = 2^{160}+685$ and $k =
10$, and tested $2^{20}$ random values of $\alpha_1,\beta_1$. The
total number of primes $q$ found was $7108$.
Of these primes, 6509 (91.6\%) produced $\rho$-values between 7.9 and
8.0, while 592 (8.3\%) had $\rho$-values between 7.8 and 7.9.
The smallest $q$ found had $623$ binary digits, giving a
$\rho$-value of $7.78$.
\end{ex}


\section{Constructing abelian varieties with given Weil numbers}
\label{s:construct}

Our Algorithm \ref{alg:construct-pi} yields $q$-Weil numbers
$\pi\in K$
that correspond, in the sense of
Honda and Tate \cite{honda-tate}, to isogeny classes of ordinary,
simple abelian varieties over prime fields
that have a point of order $r$ and embedding degree $k$
with respect to $r$.
It does not give a method to explicitly construct an
abelian variety $A$ with Frobenius $\pi\in K$.
In this section we focus on the problem of explicitly
constructing such varieties using complex multiplication techniques.

The key point of the complex multiplication construction is the fact that every
ordinary, simple abelian variety over $\F=\F_q$ with Frobenius $\pi\in K$
arises as the reduction at a prime over $q$ of some abelian variety $A_0$
in characteristic zero that has CM by the ring of integers of $K$.
Thus if we have fixed our $K$ as in Algorithm \ref{alg:construct-pi},
we can solve the construction problem for all ordinary
Weil numbers coming out
of the algorithm by compiling the finite list of
$\overline{\Q}$-isogeny classes of
abelian varieties in characteristic zero having CM by $\O_K$.
There will be one $\overline{\Q}$-isogeny class for
each equivalence class of primitive CM-types
of $K$, where $\Phi$ and $\Phi'$ are said to be equivalent
if we have $\Phi=\Phi'\circ \sigma$ for an automorphism $\sigma$
of $K$.
As we can choose our favorite field $K$ of degree $2g$ to
produce abelian varieties of dimension $g$, we can pick fields $K$
for which such lists already occur in the literature.

{}From representatives of our list of isogeny classes of abelian
varieties in characteristic zero having CM by $\O_K$, we obtain a
list $\mathcal{A}$ of abelian varieties over $\F$ with CM by
$\O_K$ by reducing at some fixed prime $\q$ over $q$. Changing the
choice of the prime $\q$ amounts to taking the reduction at $\q$ of
a conjugate abelian variety, which also has CM by $\O_K$ and hence is
$\Fbar$-isogenous to one already in the list.

For every abelian variety $A\in \mathcal{A}$,
we compute the set of its twists,
i.e., all the varieties up to $\F$-isomorphism that
become isomorphic to $A$ over $\Fbar$.
There is at least one twist $B$ of an element $A\in\mathcal{A}$
satisfying $\#B(\F) = \N_{K/\Q}(\pi-1)$, and this $B$ has a point of
order $r$ and the desired embedding degree.

Note that while efficient point-counting algorithms do not exist for varieties
of dimension $g > 1$, we can determine probabilistically whether an abelian
variety has a given order by choosing a random point, multiplying by the
expected order, and seeing if the result is the identity.

The complexity of the construction problem rapidly increases
with the genus $g=[K:\Q]/2$, and it is fair to say that we only
have satisfactory general methods at our disposal in very small genus.

In genus one, we are dealing with elliptic curves.
The $j$-invariants of elliptic curves over $\C$ with CM by $\O_K$
are the roots of the {\it Hilbert class polynomial} of~$K$,
which lies in $\Z[X]$.
The degree of this polynomial is the class number $h_K$ of~$K$, and
it can be computed in time $\widetilde O(\abs{\Delta_K})$.

For genus 2, we have to construct abelian surfaces.
Any principally polarized abelian surface is the Jacobian of a genus 2 curve,
and all genus 2 curves are hyperelliptic.  There is a theory of class
polynomials analogous to that for elliptic curves, as well as several
algorithms
to compute these polynomials, which lie in $\Q[X]$.
The genus 2 algorithms are not as well-developed as those for elliptic curves;
at present they can handle only very small quartic CM-fields, and there
exists no rigorous run time estimate.
{}From the roots in $\F$ of these polynomials, we can compute
the genus 2 curves using Mestre's algorithm.

Any three-dimensional principally polarized abelian
variety is isogenous to the Jacobian of a genus 3 curve.
There are two known families of
genus 3 curves over $\C$ whose Jacobians have CM by an order of dimension $6$.
The first family, due to Weng \cite{weng-g3}, gives hyperelliptic curves whose
Jacobians have CM by a degree-6 field containing $\Q(i)$.  The second family,
due to Koike and Weng \cite{koike-weng}, gives Picard curves (curves
of the form
$y^3 = f(x)$ with $\deg f = 4$) whose Jacobians have CM by a degree-6 field
containing $\Q(\zeta_3)$.

Explicit CM-theory is mostly undeveloped for
dimension $\ge 3$. Moreover, most principally polarized abelian varieties of
dimension $\ge 4$ are not Jacobians, as
the moduli space of Jacobians has dimension $3g -3$,
while the moduli space of abelian varieties has dimension $g(g+1)/2$.
For implementation purposes we prefer Jacobians or even hyperelliptic
Jacobians,
as these are the only abelian varieties for which group operations can be
computed efficiently.

In cases where we cannot compute every abelian
variety in characteristic zero with CM
by $\O_K$, we use a single such variety $A$
and run Algorithm \ref{alg:construct-pi} for each different CM-type
of $K$ until it yields a prime $q$ for which the reduction
of $A$ mod $q$ is in the correct isogeny class.
An example for $K=\Q(\zeta_{2p})$ with $p$ prime
is given by the Jacobian of $y^2 = x^p+a$, which has dimension $g = (p-1)/2$.


\section{Numerical examples}
\label{s:examples}

We implemented Algorithm \ref{alg:construct-pi} in MAGMA and used it
to compute examples of hyperelliptic curves of genus 2 and 3 over
fields of cryptographic size for which the Jacobians are
pairing-friendly. The subgroup size $r$ is chosen so that the
discrete logarithm problem in $A[r]$ is expected to take roughly
$2^{80}$ steps.  The embedding degree $k$ is chosen so that $r^{k/g}
\approx 1024$; this would be the ideal embedding degree for the
80-bit security level if we could construct varieties over $\F = \F_q$ with
$\#A(\F) \approx r$.  Space constraints prevent us from giving the
group orders for each Jacobian, but we note that a set of all
possible $q$-Weil numbers in $K$, and hence all possible group
orders, can be computed from the factorization of $q$ in $K$.

\begin{ex}
Let $\eta = \sqrt{-2+\sqrt{2}}$ and let $K$ be the degree-4 Galois CM field $\Q
(\eta)$.  Let $\Phi=\{\phi_1, \phi_2\}$ be the CM type of $K$ such
that $\Img(\phi_i(\eta)) > 0$.
We ran Algorithm \ref{alg:construct-pi} with CM type $(K,\Phi)$, $r = 2^{160} -
1679$, and $k = 13$.  The algorithm output the following field size:
{\scriptsize
\begin{eqnarray*}
q & = &
31346057808293157913762344531005275715544680219641338497449500238872300350617165
\setminus\\&&
40892530853973205578151445285706963588204818794198739264123849002104890399459807
\setminus\\&&
463132732477154651517666755702167 \quad \mbox{(640 bits)}
\end{eqnarray*}}
\noindent
There is a single $\Fbar_q$-isomorphism class of curves over $\F_q$ whose
Jacobians have CM by $\O_K$ and it has been computed in \cite{vanwamelen};
the desired twist turns out to be
$C: y^2 = -x^5 + 3x^4 + 2x^3 - 6x^2 - 3x + 1$.
The $\rho$-value of $\Jac(C)$ is $7.99$.
\end{ex}

\begin{ex}
Let $\eta = \sqrt{-30+2\sqrt{5}}$ and let $K$ be the degree-4 non-Galois CM
field $\Q(\eta)$.  The reflex field $\widehat K$ is $\Q(\omega)$
where $\omega = \sqrt
{-15+2\sqrt{55}}$.  Let $\Psi$ be the CM type of $K$ such that $\Img(\phi_i
(\eta)) > 0$.  We ran Algorithm \ref{alg:construct-pi} with the CM type $(K,
\Phi)$, subgroup size $r = 2^{160} - 1445$, and embedding degree $k = 13$.  The
algorithm output the following field size:
{\scriptsize
\begin{eqnarray*}
q & = &
11091654887169512971365407040293599579976378158973405181635081379157078302130927
\setminus\\&&
51652003623786192531077127388944453303584091334492452752693094089192986541533819
\setminus\\&&
35518866167783400231181308345981461 \quad \mbox{(645 bits)}
\end{eqnarray*}}
\noindent
The class polynomials for $K$ can be found in the preprint version of
\cite{weng-g2}.  We used the roots of the class polynomials mod $q$ to
construct curves over $\F_q$ with CM by $\O_K$.  As $K$ is non-Galois
with class number $4$, there are 8 isomorphism classes
of curves in 2 isogeny classes.  We found a curve $C$ in the correct isogeny
class with equation $y^2 = x^5 + a_3 x^3 + a_2 x^2 + a_1 x + a_0$, with
{\scriptsize
\begin{eqnarray*}
a_3 & = &
37909827361040902434390338072754918705969566622865244598340785379492062293493023
\setminus\\&&
07887220632471591953460261515915189503199574055791975955834407879578484212700263
\setminus\\&&
2600401437108457032108586548189769 \\
a_2 & = &
18960350992731066141619447121681062843951822341216980089632110294900985267348927
\setminus\\&&
56700435114431697785479098782721806327279074708206429263751983109351250831853735
\setminus\\&&
1901282000421070182572671506056432 \\
a_1 & = &
69337488142924022910219499907432470174331183248226721112535199929650663260487281
\setminus\\&&
50177351432967251207037416196614255668796808046612641767922273749125366541534440
\setminus\\&&
5882465731376523304907041006464504 \\
a_0 & = &
31678142561939596895646021753607012342277658384169880961095701825776704126204818
\setminus\\&&
48230687778916790603969757571449880417861689471274167016388608712966941178120424
\setminus\\&&
3813332617272038494020178561119564
\end{eqnarray*}}
\indent
The $\rho$-value of $\Jac(C)$ is $8.06$.
\end{ex}

\begin{ex}
Let $K$ be the degree-6 Galois CM field $\Q(\zeta_7)$, and let
$\Phi=\{\phi_1, \phi_2, \phi_3\}$ be the CM
type of $K$ such that $\phi_n(\zeta_7) = e^{2\pi in/7}$.
We used the CM type $(K,\Phi)$ to construct a curve $C$ whose Jacobian has
embedding degree $17$ with
respect to $r = 2^{180} - 7427$.  Since $K$ has class number $1$ and one
equivalence class of primitive CM types, there is a unique isomorphism class of
curves in characteristic zero whose Jacobians are simple and have CM by $K$;
these curves are given by $y^2 = x^7 + a$.  Algorithm
\ref{alg:construct-pi} output the following field size:
{\scriptsize
\begin{eqnarray*}
q & = &
15755841381197715359178780201436879305777694686713746395506787614025008121759749
\setminus\\&&
72634937716254216816917600718698808129260457040637146802812702044068612772692590
\setminus\\&&
77188966205156107806823000096120874915612017184924206843204621759232946263357637
\setminus\\&&
19251697987740263891168971441085531481109276328740299111531260484082698571214310
\setminus\\&&
33499 \quad \mbox{(1077 bits)}
\end{eqnarray*}}
\noindent
The equation of the curve $C$ is $y^2 = x^7 + 10$.  The $\rho$-value
of $\Jac(C)
$ is $17.95$.
\end{ex}

\noindent We conclude with an example of an 8-dimensional abelian
variety found using our algorithms.
We started with a single CM abelian variety $A$ in characteristic
zero and applied our algorithm to different CM-types until we found
a prime $q$ for which the reduction has the given embedding degree.

\begin{ex}
Let $K = \Q(\zeta_{17})$.  We set $r = 1021$ and $k = 10$ and ran
Algorithm \ref
{alg:construct-pi} repeatedly with different CM types for $K$.  Given the
output, we tested the Jacobians of twists of $y^2 = x^{17}+1$ for the specified
number of points.  We found that the curve $y^2 = x^{17} + 30$ has embedding
degree $10$ with respect to $r$ over the field $\F$ of order
\[ q = 6869603508322434614854908535545208978038819437. \]
The CM type was
\[ \Phi = \{\phi_1,\phi_3,\phi_5,\phi_6,\phi_8,\phi_{10},\phi_{13},\phi_{15}\},
\]
where $\phi_n(\zeta_{17}) = e^{2\pi in /17}$.
The $\rho$-value of $\Jac(C)$ is $121.9$.
\end{ex}


\end{document}